\DeclareMathOperator{\ex}{ex}
\newtheorem{theorem}{Theorem}
\newtheorem{lemma}[theorem]{Lemma}
\newtheorem*{claim}{Claim}
\newtheorem*{remark}{Remark}
\numberwithin{equation}{section}
\begin{document}

\title{Rainbow cycles vs. rainbow paths}
\author{Anastasia Halfpap\thanks{Department of Mathematical Sciences,
University of Montana, Missoula, Montana 59812, USA.}
\and
Cory Palmer\footnotemark[1]
}
\maketitle

\begin{abstract}
    An edge-colored graph $F$ is {\it rainbow} if each edge of $F$ has a unique color. The {\it rainbow Tur\'an number} $\ex^*(n,F)$ of a graph $F$ is the maximum possible number of edges in a properly edge-colored $n$-vertex graph with no rainbow copy of $F$. The study of rainbow Tur\'an numbers was introduced by Keevash, Mubayi, Sudakov, and Verstra\"ete. Johnson and Rombach introduced the following rainbow-version of generalized Tur\'an problems: for fixed graphs $H$ and $F$, let $\ex^*(n,H,F)$ denote the maximum number of rainbow copies of $H$ in an $n$-vertex properly edge-colored graph with no rainbow copy of $F$.
    
    In this paper we investigate the case $\ex^*(n,C_\ell,P_\ell)$ and give a general upper bound as well as exact results for $\ell = 3,4,5$. Along the way we establish a new best upper bound on $\ex^*(n,P_5)$. Our main motivation comes from an attempt to improve bounds on $\ex^*(n,P_\ell)$, which has been the subject of several recent manuscripts.
\end{abstract}

\section{Introduction}
We say that an edge-colored graph is \textit{rainbow} if no two edges receive the same color. We use the term {\it rainbow}-$F$ to refer to a rainbow copy of the graph $F$.
A properly edge-colored graph is {\it rainbow-$F$-free} if it contains no rainbow copy of $F$ as a subgraph.
The \textit{rainbow Tur$\acute{a}$n number} of a fixed graph $F$ is the maximum possible number of edges in a properly edge-colored $n$-vertex rainbow-$F$-free graph $G$. We denote this maximum by $\ex^*(n,F)$. The study of rainbow Tur\'an numbers was introduced by  Keevash, Mubayi, Sudakov, and Verstra\"ete in \cite{kmsv}.

Observe that $\ex(n,F) \leq \ex^*(n,F)$, since any $F$-free graph clearly contains no rainbow-$F$. In fact, it was proved in \cite{kmsv} that for any $F$,
\[
\ex(n,F) \leq \ex^*(n,F) \leq \ex(n,F) + o(n^2).
\]
However, for bipartite $F$, $\ex(n,F)$ and $\ex^*(n,F)$ are not asymptotic in general.
For example, in \cite{kmsv} it was shown that asymptotically $\ex^*(n,C_6)$ is a constant factor larger than $\ex(n,C_6)$. Another interesting example concerns acyclic graphs. The maximum number of edges in an $n$-vertex graph containing no cycle is $n-1$. On the other hand, the maximum number of edges in a properly edge-colored $n$-vertex graph with no rainbow cycle is at least $n \log n$ (see \cite{kmsv}). Moreover, Das, Lee and Sudakov \cite{DLS13} showed that this maximum is at most $n^{1+\epsilon}$ for any $\epsilon >0$ and $n$ large enough.

We denote by $P_\ell$ the path on $\ell$ edges, i.e., $\ell+1$ vertices. The behavior of $\ex^*(n,P_\ell)$ is known only for $\ell \leq 4$. For $\ell = 1$ and $\ell = 2$, the result $\ex^*(n, P_\ell) = \ex(n,P_\ell)$ is trivial, since any properly colored $P_1$ or $P_2$ is rainbow. For $\ell = 3$ and $\ell = 4$, Johnston, Palmer and Sarkar \cite{jps} showed:

\begin{theorem}[Johnston-Palmer-Sarkar \cite{jps}]\label{path-edge-bounds}
If $n$ is divisible by $4$, then 
\[\ex^*(n,P_3) = \frac{3}{2}n.
\]
If $n$ is divisible by $8$, then
\[
\ex^*(n,P_4) = 2n.
\]
\end{theorem}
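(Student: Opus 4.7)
The plan is to prove each equality by pairing an explicit extremal construction with a matching upper bound obtained from a structural analysis of the components of a properly-edge-colored rainbow-$P_\ell$-free graph.

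For $\ex^*(n,P_3)=\tfrac{3n}{2}$, the lower bound uses $n/4$ vertex-disjoint copies of $K_4$ with the proper $3$-edge-coloring whose color classes are the three perfect matchings. A direct enumeration shows that in such a $K_4$ every $P_3$ repeats a color on its first and third edges, so no rainbow $P_3$ appears. For the matching upper bound I would show that in a properly-edge-colored rainbow-$P_3$-free graph $G$, any vertex $v$ with $\deg(v)=3$ lies in a component on $4$ vertices that is a subgraph of $K_4$, and any vertex with $\deg(v)\ge 4$ lies in a star component. The key local fact: if $v$ has neighbors $u_1,u_2,u_3$ with edge-colors $c_1,c_2,c_3$ and $u_1$ has an edge $u_1 w$ with $w \notin \{v,u_2,u_3\}$, then the paths $w u_1 v u_2$ and $w u_1 v u_3$ are properly colored and must repeat a color, forcing $c(u_1 w)=c_2$ and simultaneously $c(u_1 w)=c_3$, a contradiction. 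An analogous argument for $\deg(v)\ge 4$ also forbids edges inside $N(v)$, leaving only a star. Components of maximum degree $\le 2$ must be paths or even cycles carrying an alternating $2$-coloring, else three consecutive edges form a rainbow $P_3$. Summing edges over components gives an edge-to-vertex ratio of at most $3/2$, with equality only for $K_4$-components, yielding both the bound and the divisibility condition $4\mid n$.

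For $\ex^*(n,P_4)=2n$, the lower bound uses $n/8$ vertex-disjoint copies of $K_{4,4}$ colored by the Klein four-group: identify each part with $\mathbb{Z}_2^2$ and assign edge $a_i b_j$ the color $i\oplus j$. Each color class is a perfect matching, so the coloring is proper. Any $P_4$ has the form $a_{i_1}b_{j_1}a_{i_2}b_{j_2}a_{i_3}$ (or its symmetric counterpart starting in $B$) with edge-colors $i_1\oplus j_1,\ i_2\oplus j_1,\ i_2\oplus j_2,\ i_3\oplus j_2$. Since $i_1,i_2,i_3$ are distinct, their three pairwise XORs exhaust the nonzero elements of $\mathbb{Z}_2^2$; and $j_1\oplus j_2$ is itself nonzero, so it coincides with one of them. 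A short check then shows two of the four edge-colors agree, so no $P_4$ is rainbow and the construction contributes $2n$ edges.

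The matching upper bound is the crux of the theorem. Following the template of the $P_3$ case but with substantially heavier case analysis, I would aim to show that every component of a rainbow-$P_4$-free properly-edge-colored graph has edge-to-vertex ratio at most $2$. A natural intermediate step is $\Delta(G)\le 4$: if $v$ has five edges of distinct colors $c_1,\dots,c_5$, then for each pair $i\ne j$ and each neighbor $a$ of $u_i$ other than $v$, the color $c(a u_i)$ is tightly constrained by every choice of $u_j$ and neighbor $b$ of $u_j$ via the $P_4$ of the form $a u_i v u_j b$, and the abundance of such extensions should force some such $P_4$ to be rainbow. Once $\Delta(G)\le 4$ is established, any component of density exactly $2$ must be $4$-regular, and a classification of proper $4$-edge-colorings of $4$-regular graphs with no rainbow $P_4$ should isolate the Klein-colored $K_{4,4}$ as the unique extremal block, producing the divisibility $8\mid n$. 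The hardest step is this classification: many graphs admit proper $4$-edge-colorings, and showing that every alternative develops a rainbow $P_4$ requires a global argument that goes beyond the single-vertex local obstruction that sufficed for $P_3$.
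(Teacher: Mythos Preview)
The paper does not prove this theorem; it is quoted from \cite{jps} and used as a black box in the proofs of Theorems~\ref{length-3} and~\ref{length-4}. So there is no ``paper's own proof'' to compare your proposal against.

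That said, your $P_3$ argument is correct and essentially the one in \cite{jps}: the local obstruction you describe cleanly forces every component to be a subgraph of $K_4$, a star, a path, or an even cycle, each with edge-to-vertex ratio at most $3/2$.

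Your $P_4$ upper bound, however, contains a genuine error. The ``natural intermediate step $\Delta(G)\le 4$'' is false: a properly edge-colored star $K_{1,k}$ contains no $P_4$ at all, hence no rainbow $P_4$, yet has $\Delta=k$ arbitrarily large. So the argument you sketch (constraining $c(au_i)$ via paths $a u_i v u_j b$) cannot possibly succeed without an additional hypothesis, because in a star there are no such vertices $a,b$. The actual proof in \cite{jps} does not establish a global degree bound; it is a more delicate component-by-component analysis in which high-degree vertices are permitted but are shown to sit in sparse (star-like) pieces, while dense pieces are shown to be $4$-regular. Your plan conflates these two regimes. The classification of $4$-regular extremal components that you allude to at the end is indeed the heart of the matter, but you arrive at it via a false lemma, and you do not indicate how that classification would actually be carried out.
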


The best-known general lower bound and upper bound are due to Johnston and Rombach \cite{jr} and Ergemlidze, Gy\H{o}ri and Methuku \cite{EGyM19}, respectively:

\begin{theorem}[Johnston-Rombach \cite{jr}; Ergemlidze-Gy\H{o}ri-Methuku \cite{EGyM19}]\label{edge-bound}
For $\ell \geq 3$,
\[
\frac{\ell}{2}n\leq \ex^*(n,P_\ell)\leq \left(\frac{9\ell+5}{7}\right)n.
\]
\end{theorem}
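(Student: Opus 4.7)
My plan is to exhibit an $\ell$-regular properly edge-colored graph $H$ on a bounded number of vertices which contains no rainbow $P_\ell$, and then take $n/|V(H)|$ vertex-disjoint copies. Each copy contributes $|V(H)|\cdot \ell/2$ edges, yielding $\ell n/2$ in total; rainbow-$P_\ell$-freeness is preserved by disjoint unions. A natural candidate is $H = K_{\ell+1}$ equipped with a carefully chosen proper edge coloring---for instance, for $\ell=3$ any proper $3$-coloring of $K_4$ works, since each Hamiltonian path of $K_4$ is forced to use two edges from the same perfect-matching color class. For general $\ell$, a suitable coloring of $K_{\ell+1}$ (or a comparable $\ell$-regular gadget) is supplied by Johnston--Rombach \cite{jr}.

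\textbf{Upper bound.} Let $G$ be properly edge-colored and rainbow-$P_\ell$-free. The first step is to observe: if $P=v_0v_1\cdots v_k$ is a longest rainbow path in $G$ with edge colors $c_1,\ldots,c_k$, then $k\leq \ell-1$. For any neighbor $w$ of $v_0$ with $w\notin V(P)$, the path $wv_0v_1\cdots v_k$ would be a longer rainbow path unless $\text{color}(v_0w)\in\{c_1,\ldots,c_k\}$. Because the proper coloring allows at most one edge of each color at $v_0$, and $v_0v_1$ already uses color $c_1$, there are at most $k-1$ edges from $v_0$ to $V\setminus V(P)$. At most $k$ edges run from $v_0$ into $V(P)\setminus\{v_0\}$. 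Hence $\deg(v_0)\leq 2k-1\leq 2\ell-3$. Applying this argument to every subgraph of $G$ (each itself properly edge-colored and rainbow-$P_\ell$-free) shows that $G$ is $(2\ell-3)$-degenerate, giving the crude bound $|E(G)|\leq (2\ell-3)n$.

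\textbf{The main obstacle} is improving the asymptotic constant from $2\ell$ down to $\tfrac{9\ell+5}{7}\sim \tfrac{9}{7}\ell$. (Note that the crude bound above actually beats $\tfrac{9\ell+5}{7}n$ for $\ell\leq 5$; the refinement is only needed for large $\ell$.) My plan is to sharpen the endpoint analysis: when $\deg(v_0)$ is close to $2k-1$, both $V(P)\setminus\{v_0\}$ and the ``color-reused'' outside neighbors of $v_0$ are large, forcing many nearby vertices to serve as endpoints of near-longest rainbow paths and thus to inherit similar degree restrictions. I would try to capture this joint structure via a weighted or discharging argument, redistributing a carefully chosen charge across each longest rainbow path to extract the improved bound. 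Executing this program cleanly---with the particular constant $\tfrac{9\ell+5}{7}$ emerging from a delicate case analysis---is the central technical challenge, and is carried out by Ergemlidze, Gy\H{o}ri, and Methuku in \cite{EGyM19}.
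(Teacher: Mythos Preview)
The paper does not actually prove this theorem; it is stated as a cited result, with the lower bound attributed to \cite{jr} and the upper bound to \cite{EGyM19}. Your write-up is similar in spirit: you sketch around the edges and defer the real content to the same references. So in that sense there is no ``paper's proof'' to compare against for the upper bound, and your deferral to \cite{EGyM19} is exactly what the paper does.

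Where the paper does give substance is the lower-bound construction, and here your guess is off. The Johnston--Rombach gadget (described in Section~\ref{general-section} of the paper) is \emph{not} $K_{\ell+1}$: it is the $(\ell-1)$-dimensional hypercube $Q_{\ell-1}$ with all antipodal (``diagonal'') edges added, properly edge-colored by coordinate-of-change on cube edges and a single new color on diagonals. This graph $D_{2^{\ell-1}}^*$ is $\ell$-regular on $2^{\ell-1}$ vertices, and the paper gives a short self-contained argument that it is rainbow-$P_\ell$-free. For $\ell=3$ your $K_4$ happens to coincide with $D_4^*$ (since $Q_2$ plus diagonals is $K_4$), which is why your check there goes through; for $\ell\geq 4$ the two graphs diverge, and whether $K_{\ell+1}$ admits a proper edge-coloring with no rainbow Hamiltonian path is a separate (and nontrivial) question that you have not addressed. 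So your lower-bound sketch has a genuine gap at the point where you write ``a suitable coloring of $K_{\ell+1}$ \ldots is supplied by Johnston--Rombach'': that is not what they supply.

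Your crude $(2\ell-3)$-degeneracy argument for the upper bound is correct and is essentially the content of Lemma~\ref{degree-lemma} in the paper (applied there only to vertices on a rainbow $C_\ell$, but the same reasoning works for endpoints of a longest rainbow path). Your outline of how one might push toward $\tfrac{9\ell+5}{7}$ via discharging is plausible in spirit but, as you acknowledge, is not a proof; the paper makes no attempt at this either.
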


Johnston and Rombach also considered a rainbow-version of generalized Tur\'an problems popularized by Alon and Shikhelman \cite{AS}. For fixed graphs $H$ and $F$, let $\ex^*(n,H,F)$ denote the maximum possible number of rainbow copies of $H$ in an $n$-vertex properly edge-colored graph with no rainbow-$F$. In this paper we will be primarily concerned with determining the value of $\ex^*(n,P_\ell,C_\ell)$. Our motivation comes from the investigation of $\ex^*(n,P_\ell)$, but the study of $\ex^*(n,H,F)$ is a natural analogue of generalized Tur\'an problems and rainbow Tur\'an problems.

Let us mention that other generalizations have been investigated. Gerbner, M\'esz\'aros, Methuku and Palmer \cite{GMMP} investigated the function $\ex(n,H,\textrm{rainbow-}F)$ which is the maximum number of copies of $H$ in a properly edge-colored $n$-vertex graph with no rainbow-$F$. In most cases they considered the case when $H=F$. The case $H=F= K_k$ was considered recently by Gowers and Janzer \cite{GJ}.

The construction achieving the lower bound for $\ex^*(n,P_4)$ contains many rainbow walks of length $4$, but as there is no rainbow-$P_4$, each of these walks must be a cycle. In fact, this construction has the maximum number of rainbow-$C_4$ copies without a rainbow-$P_4$. A better understanding of $\ex^*(n,C_\ell,P_\ell)$ may help improve the bounds on $\ex^*(n,P_\ell)$.

The main results in this paper are summarized in the following theorem.

\begin{theorem}\label{main} For $\ell \geq 3$,
\[
\frac{(\ell-1)!}{2}n \leq \ex^*(n,C_\ell,P_\ell) \leq (2\ell-3)^{\ell-2} \cdot \ex^*(n,P_\ell) \leq c(\ell) n
\]
for some constant $c(\ell)$ depending on $\ell$.
Moreover, for $\ell =3,4,5$ we have
\[
\ex^*(n,C_\ell,P_\ell) = \frac{(\ell-1)!}{2}n
\]
when $n$ is divisible by $2^{\ell-1}$.
\end{theorem}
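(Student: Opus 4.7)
The plan is to handle the three parts of Theorem~\ref{main} separately: the linear lower bound, the general upper bound via $\ex^*(n,P_\ell)$, and the matching upper bound for $\ell=3,4,5$.

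\textbf{Lower bound.} I would exhibit, for each $\ell\ge 3$, a properly edge-colored ``block'' graph $H_\ell$ on $2^{\ell-1}$ vertices that is rainbow-$P_\ell$-free and contains $\frac{(\ell-1)!}{2}\cdot 2^{\ell-1}$ rainbow $C_\ell$'s; disjoint unions of $H_\ell$'s yield the construction whenever $2^{\ell-1}\mid n$. For $\ell=3$, $H_3=K_4$ with its proper $3$-edge-coloring works: the two non-incident edges of any $P_3$ form a perfect matching and thus share a color, while all four triangles are automatically rainbow. For $\ell=4$, take $H_4=K_{4,4}$ with color $c(ab)=a+b$ in $(\mathbb{Z}_2)^2$: using that the four distinct elements of $(\mathbb{Z}_2)^2$ sum to $0$, one checks that a rainbow $P_4$ would force its two endpoints to coincide (contradiction), while a direct enumeration gives $24=3\cdot 8$ rainbow $C_4$'s. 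For general $\ell$ I would use an analogous Cayley-type construction on $(\mathbb{Z}_2)^{\ell-1}$ colored by the difference map; the same ``sum of colors'' argument rules out rainbow $P_\ell$, and counting cyclic color sequences gives the claimed number of rainbow $C_\ell$'s.

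\textbf{General upper bound.} To prove $\ex^*(n,C_\ell,P_\ell)\le (2\ell-3)^{\ell-2}\,\ex^*(n,P_\ell)$, I would charge each rainbow $C_\ell$ to one of its edges and bound the charge per edge by $(2\ell-3)^{\ell-2}$. The key lemma, proved by induction on path length, would assert: in any rainbow-$P_\ell$-free graph $G$ and any rainbow walk $v_0 v_1\cdots v_k$ with $k\le \ell-2$, the number of valid extensions to $v_{k+1}$ (a new-color neighbor of $v_k$ not in $\{v_0,\dots,v_k\}$) is at most $2\ell-3$. Iterating across the $\ell-2$ intermediate steps needed to extend an oriented edge to a potential rainbow $C_\ell$, and noting that the closing step contributes at most one cycle, each edge lies in at most $(2\ell-3)^{\ell-2}$ rainbow $C_\ell$'s. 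Summing over edges yields the first inequality; combining with Theorem~\ref{edge-bound} gives $c(\ell)=(2\ell-3)^{\ell-2}\cdot (9\ell+5)/7$. The main obstacle is the extension lemma, whose proof must leverage the fact that no rainbow $P_{\ell-1}$ can be extended at either endpoint to bound the local branching rate.

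\textbf{Exact values for $\ell=3,4,5$.} Here I would pursue a direct structural analysis to sharpen the multiplier from $(2\ell-3)^{\ell-2}$ to $(\ell-1)!/2$. For $\ell=3$, every properly colored triangle is automatically rainbow, and the rainbow-$P_3$-free condition imposes strong local constraints: any edge from a triangle $T$ to a vertex outside $T$ must use a color already on $T$, which together with a small-degree analysis inside $T$ forces the containing component to be $K_4$. Summing across components bounds the triangle count by $n$, matching the construction. For $\ell=4,5$, the approach is analogous but requires classifying the possible ``$C_\ell$-rich'' local blocks in rainbow-$P_\ell$-free graphs and showing that each contributes at most $(\ell-1)!/2$ rainbow $C_\ell$'s per vertex. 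I expect $\ell=5$ to be the hardest case, where the improved upper bound on $\ex^*(n,P_5)$ promised in the abstract likely plays a crucial role in pinning down the extremal block structure.
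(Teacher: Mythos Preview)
Your lower-bound plan is essentially the paper's: your $K_4$ and $K_{4,4}$ coincide with the paper's construction $D_{2^{\ell-1}}^*$ (the hypercube $Q_{\ell-1}$ with antipodal diagonals, coloured by bit position plus one extra colour), though your ``Cayley-type'' generalisation for arbitrary $\ell$ is left vague and would need to be made precise.

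There are, however, concrete gaps elsewhere. Your extension lemma for the general upper bound is false as stated: a star $K_{1,m}$ is rainbow-$P_\ell$-free for every $\ell\ge 3$, yet the edge from a leaf to the centre has $m-1$ valid one-step extensions. What the paper uses instead is the much simpler observation that any vertex lying on \emph{some} rainbow $C_\ell$ has degree at most $2\ell-3$ (an edge leaving the cycle in a fresh colour would complete a rainbow $P_\ell$); this immediately bounds the number of rainbow $C_\ell$'s through a fixed edge by $(2\ell-3)^{\ell-2}$, with no induction required. For $\ell=3$, your claim that a triangle's component must be $K_4$ is also false (attach to a properly $3$-coloured triangle a pendant edge whose colour matches the opposite side); the paper instead shows each edge lies in at most two triangles and combines this with $\ex^*(n,P_3)=\tfrac32 n$ to get $\tfrac{2}{3}\cdot\tfrac32 n=n$. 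The same per-edge scheme, with bound $(\ell-1)!$ and $\ex^*(n,P_4)=2n$, handles $\ell=4$.

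The case $\ell=5$ diverges most seriously from your expectation. The bound $\ex^*(n,P_5)\le 4n$ that the paper proves along the way is \emph{not} what yields the sharp result: combined with the per-edge bound of $4!$ it only gives $\tfrac{4!}{5}\cdot 4n=19.2n$. The paper's actual argument is a degree-averaging one: through a substantial case analysis it shows that on every rainbow $C_5$, each vertex of degree $\ge 6$ can be paired with a nearby cycle vertex of degree $\le 4$, so the set of vertices lying on rainbow $C_5$'s has average degree at most $5$; then $\tfrac{4!}{2\cdot 5}\cdot 5n=12n$. Your proposed block classification is a different and likely harder route, and nothing in the proposal indicates how it would be carried out.
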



In Section~\ref{general-section} we give simple general bounds on $\ex^*(n,C_\ell,P_\ell)$ which gives the first part of Theorem~\ref{main}. In Section~\ref{small-bounds} we give  matching upper bounds when $\ell=3,4,5$ and $n$ is divisible by $2^{\ell-1}$. Note that this immediately implies tight asymptotic bounds for all $n$.

\section{General bounds}\label{general-section}

We begin with the construction from \cite{jr} giving a lower bound on $\ex^*(n,P_\ell)$.
Let $Q_{\ell-1}$ be the $\ell-1$ dimensional cube, i.e., the graph whose vertex set is the set of $01$-strings of length $\ell-1$ and two vertices are joined by an edge if and only if their Hamming distance is exactly $1$.

Now let us color the edges of $Q_{\ell-1}$ by the position in which their corresponding strings differ. For each vertex $x$ of 
$Q_{\ell-1}$, let $\overline{x}$ be the {\it antipode} of $x$. That is, $\overline{x}$ is the unique vertex of Hamming distance $\ell-1$ from $x$ (i.e. all bits of $x$ are swapped). Now add all edges $x\overline{x}$ to this graph and color these edges with a new color $\ell$. Call these edges {\it diagonal} edges and denote the resulting edge-colored graph $D_{2^{\ell-1}}^*$.

It is easy to see that the edge-coloring above is proper. It was shown in \cite{jr} that $D_{2^{\ell-1}}^*$ contains no rainbow-$P_\ell$. Let us give another argument here for completeness. Suppose that $D_{2^{\ell-1}}^*$ contains a rainbow path $P$ of length $\ell$. The path $P$ must include an edge $x\overline{x}$ of color $\ell$. Removing the edge $x\overline{x}$ from $P$ leaves two subpaths $P'$ and $P''$ (allowing for $P''$ to be the empty path when $P$ ends with edge $x\overline{x}$). The subpath $P'$ corresponds to bit changes to $x$ and, as $P$ is rainbow, $P''$ corresponds to the complement of these bit changes starting with $\overline{x}$. Therefore, $P'$ and $P''$ share an end-vertex $y$ (allowing for $y = \overline{x}$ when $P''$ is empty), i.e., $P$ is a cycle, a contradiction.

\begin{theorem}\label{lower-bound}
For $\ell \geq 3$, we have $\frac{(\ell-1)!}{2}n \leq \ex^*(n, C_\ell, P_\ell)$ when $n$ is divisible by $2^{\ell-1}$. 
\end{theorem}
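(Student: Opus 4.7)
The plan is to use the construction $D_{2^{\ell-1}}^*$ described in the excerpt (which is already shown to be rainbow-$P_\ell$-free and properly edge-colored), count the rainbow copies of $C_\ell$ inside it, and then take vertex-disjoint copies with disjoint color palettes to handle arbitrary $n$ divisible by $2^{\ell-1}$.

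First I would observe that $D_{2^{\ell-1}}^*$ uses only $\ell$ colors in total: the $\ell-1$ bit-position colors on edges of the cube $Q_{\ell-1}$, plus the single extra color $\ell$ on diagonal edges. Since a rainbow $C_\ell$ needs $\ell$ distinct colors, every rainbow $C_\ell$ must use exactly one diagonal edge together with one edge of each cube color. Hence such cycles are in bijection with pairs $(\{x,\overline{x}\}, P)$, where $\{x,\overline{x}\}$ is a diagonal edge and $P$ is a path of length $\ell-1$ from $x$ to $\overline{x}$ in $Q_{\ell-1}$ using each of the $\ell-1$ cube colors exactly once.

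Next I would count such paths. Fix a diagonal edge $\{x,\overline{x}\}$. Any walk from $x$ that uses each cube color exactly once corresponds to an ordering $(c_1,c_2,\ldots,c_{\ell-1})$ of the $\ell-1$ bit positions; after step $i$ the current vertex is $x$ with bits $c_1,\ldots,c_i$ flipped, which are all distinct vertices (each $i$ produces a different flip-set), and the final vertex is $\overline{x}$. Thus every ordering yields a genuine path, and distinct orderings starting at $x$ yield distinct paths (the first edge alone distinguishes them). This gives exactly $(\ell-1)!$ rainbow paths for each of the $2^{\ell-2}$ antipodal pairs, and hence $2^{\ell-2}\cdot(\ell-1)!$ rainbow copies of $C_\ell$ in a single $D_{2^{\ell-1}}^*$.

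Finally, for $n$ divisible by $2^{\ell-1}$, I would take $n/2^{\ell-1}$ vertex-disjoint copies of $D_{2^{\ell-1}}^*$, each using its own disjoint set of $\ell$ colors. The coloring remains proper, and any rainbow $P_\ell$ would have to lie inside a single connected component, which is ruled out by the argument already given in the excerpt. The total count of rainbow $C_\ell$ is
\[
\frac{n}{2^{\ell-1}}\cdot 2^{\ell-2}\cdot(\ell-1)! \;=\; \frac{(\ell-1)!}{2}\,n,
\]
yielding the desired lower bound. No step here is a real obstacle; the only thing to be careful about is the verification that each permutation of cube colors produces a distinct simple path from $x$ to $\overline{x}$, which I address above via the flip-set argument.
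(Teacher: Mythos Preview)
Your proposal is correct and follows essentially the same route as the paper: take $n/2^{\ell-1}$ disjoint copies of $D_{2^{\ell-1}}^*$, observe that each rainbow $C_\ell$ must use a diagonal edge, and count the $(\ell-1)!$ rainbow paths from $x$ to $\overline x$ in $Q_{\ell-1}$ via permutations of the bit positions. The only cosmetic differences are that the paper does not bother with disjoint color palettes (vertex-disjointness already suffices for the coloring to stay proper and rainbow-$P_\ell$-free), and your parenthetical ``the first edge alone distinguishes them'' is slightly imprecise --- two orderings can share a first edge --- but the flip-set argument you give just before it is the correct justification.
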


\begin{proof}
Let $G$ be a graph of $n/2^{\ell-1}$ vertex-disjoint copies of $D_{2^{\ell-1}}^*$.
As each copy of $D_{2^{\ell}}^*$ has exactly $\ell$ edge colors, any rainbow-$C_\ell$ must contain an edge of color $\ell$. Recall that edges of color $\ell$ are the diagonal edges.

Fix a diagonal edge $x\overline{x}$ and count the number of rainbow-$C_\ell$ copies containing $x\overline{x}$. This is precisely the number of length-$(\ell-1)$ rainbow paths between $x$ and $\overline{x}$ colored from $\{1,2,\dots,\ell-1\}$. Each such rainbow-$P_{\ell-1}$ is obtained by a sequence of $\ell-1$ bit changes. There are $(\ell-1)!$ distinct sequences, each of which produces a distinct rainbow path between $x$ and $\overline{x}$, so $x\overline{x}$ is included in $(\ell-1)!$ rainbow-$C_\ell$ copies. There are $2^{\ell-2}$ diagonal edges in each $D_{2^{\ell-1}}^*$ (one for each antipodal pair $x$, $\overline{x}$), and so a copy of $D_{2^{\ell-1}}^*$ contains a total of 
\[
(\ell-1)!\cdot 2^{\ell-2}
\] 
rainbow-$C_\ell$ copies. Thus the total number of rainbow-$C_\ell$ copies in $G$ is
\[
(\ell-1)!\cdot 2^{\ell-2}\cdot {\frac{n}{2^{\ell-1}}}   = \frac{(\ell-1)!}{2}n.
\]
\end{proof}

We need the following simple lemma which will also be useful in Section~\ref{small-bounds}.

\begin{lemma}\label{k-edge-colors}
Fix integers $k\geq \ell\geq 1$.
If $G$ is a properly $k$-edge-colored graph and $xy$ is an edge of $G$, then $xy$ is contained in at most $\frac{(k-1)!}{(k-\ell)!}$ rainbow-$C_\ell$ copies. In particular, if $k=\ell$, then $xy$ is contained in at most $(\ell-1)!$ rainbow-$C_\ell$ copies.
\end{lemma}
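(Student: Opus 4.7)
The plan is to parametrize each rainbow $C_\ell$ containing $xy$ by the path that remains when $xy$ is deleted. Such a path has $\ell-1$ edges, runs from $y$ to $x$, and carries $\ell-1$ pairwise distinct colors, all different from the color of $xy$. Bounding the number of rainbow cycles through $xy$ thus reduces to bounding the number of such anchored rainbow paths from $y$ to $x$, which can be done by constructing the path edge by edge from $y$ and exploiting the fact that the coloring is proper.

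Let $c$ denote the color of $xy$. First I would build the path one edge at a time, starting at $y$. At the $i$-th step (for $i = 1, 2, \ldots, \ell-1$), we have already used $i$ colors: the color $c$ together with the $i-1$ colors chosen on previous steps. Because the coloring is proper, each color appears on at most one edge incident to the current vertex, so at most $k - i$ edges are available for selection. Multiplying these bounds yields
\[
(k-1)(k-2)\cdots(k-\ell+1) = \frac{(k-1)!}{(k-\ell)!}.
\]
Since each rainbow $C_\ell$ through $xy$ produces a unique path from $y$ to $x$ (the traversal direction being pinned down by requiring $xy$ to be crossed from $x$ to $y$ first), this product is also an upper bound on the number of rainbow $C_\ell$ copies containing $xy$.

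There is no real obstacle in this argument; the only point to be careful about is avoiding double-counting from the two possible traversal directions of a cycle, but anchoring on the oriented edge $xy$ eliminates this ambiguity. The special case $k = \ell$ stated in the lemma then follows by substitution, since $\frac{(\ell-1)!}{0!} = (\ell-1)!$.
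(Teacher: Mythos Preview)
Your proof is correct and follows essentially the same approach as the paper: both arguments exploit that in a proper edge-coloring a color sequence starting at a fixed vertex determines at most one path, so the number of rainbow $(\ell-1)$-paths from $y$ to $x$ avoiding the color of $xy$ is bounded by the number of such sequences, namely $(k-1)(k-2)\cdots(k-\ell+1)$. The paper phrases this as an injection from paths to ordered color lists, while you phrase it as a greedy step-by-step construction with at most $k-i$ choices at step $i$; these are the same idea.
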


\begin{proof}
Note that the rainbow-$C_\ell$ copies containing an edge $xy$ correspond to the rainbow paths of length $\ell-1$ with endpoints $x$ and $y$ which do not use the color on $xy$. For each rainbow path $P = xv_1v_2\cdots v_{\ell-2}y$, associate to $P$ the ordered list of edge colors $(c(xv_1),\dots,c(v_{\ell-2}y))$. There are $\frac{(k-1)!}{(k-\ell)!}$ possible distinct lists, so we are done as long as no two distinct paths between $x$ and $y$ are associated to the same list. Suppose to the contrary that $P_1 = xv_1v_2\cdots v_{\ell-1}y$ and $P_2 = xw_1w_2\cdots w_{\ell-1}y$ are distinct rainbow paths with $(c(xv_1), c(v_1v_2),\dots ,c(v_{\ell-2}y)) = (c(xw_1), c(w_1w_2),\dots ,c(w_{\ell-2}y))$. Since $P_1$ and $P_2$ are distinct, there is a smallest index $i$ such that $v_i \neq w_i$; clearly, $i \geq 1$. But (making, if necessary, the identifications $x = v_0 = w_0$), we have $c(v_{i-1}v_i) = c(w_{i-1}w_i)$. By the choice of $i$, $v_{i-1} = w_{i-1}$. This is a contradiction to the hypothesis that $G$ is properly $k$-edge-colored, since now $v_{i-1}v_i$ and $v_{i-1}w_i$ are distinct edges incident to $v_{i-1}$ which receive the same color.
\end{proof}

In a proper $\ell$-edge-coloring each rainbow-$C_\ell$ contains an edge of color $1$. In an $n$-vertex graph there are at most $\frac{n}{2}$ edges of color $1$. Therefore, Lemma~\ref{k-edge-colors} implies that there are at most ${(\ell-1)!} \cdot \frac{n}{2}$ rainbow-$C_\ell$ copies. However, it is not clear that using only $\ell$ edge colors is optimal. A proof of this fact would determine $\ex^*(n,C_\ell,P_\ell)$, but this appears to be difficult.

We now give an upper bound on $\ex^*(n,C_\ell,P_\ell)$ for general $\ell$. The heart of the argument is the following simple lemma:

\begin{lemma}\label{degree-lemma}
Let $G$ be a properly edge-colored graph with no rainbow-$P_\ell$. If $v_1v_2 \cdots v_\ell v_1$ is a rainbow-$C_\ell$ in $G$, then $d(v_i) \leq 2\ell -3$ for $1 \leq i \leq \ell$.
\end{lemma}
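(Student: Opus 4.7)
The plan is to argue by contradiction: if $d(v_i) \ge 2\ell - 2$ for some $i$, I will construct a rainbow-$P_\ell$ inside $G$. By the cyclic symmetry of the labelling of the rainbow cycle, it suffices to treat $v_1$. The idea is to delete the edge $v_1 v_2$ from the cycle, leaving the rainbow-$P_{\ell-1}$ given by $Q = v_2 v_3 \cdots v_\ell v_1$, and then extend $Q$ by one more edge incident to its endpoint $v_1$ to obtain a rainbow-$P_\ell$.

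The heart of the argument is a counting bound on the edges at $v_1$ that \emph{fail} to produce such an extension. An edge $v_1 u$ fails exactly when either (i) $u \in \{v_2, \ldots, v_\ell\}$, so that $u$ already lies on $Q$, or (ii) $c(v_1 u)$ coincides with one of the $\ell - 1$ colors used on $Q$. Since $G$ is simple there are at most $\ell - 1$ edges of type (i), and since the coloring is proper there are at most $\ell - 1$ edges of type (ii). The edge $v_1 v_\ell$ lies in both sets, since its other endpoint is $v_\ell$ and it carries color $c(v_\ell v_1)$, which is one of the colors appearing on $Q$. Hence by inclusion-exclusion the number of forbidden edges at $v_1$ is at most $(\ell-1) + (\ell-1) - 1 = 2\ell - 3$. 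Any edge at $v_1$ outside both sets extends $Q$ to a rainbow-$P_\ell$, contradicting the hypothesis whenever $d(v_1) \ge 2\ell - 2$.

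The one point that deserves care is checking that the overlap between the two forbidden sets is exactly what I claim. The edge $v_1 v_2$ is in the first set, but its color $c(v_1 v_2)$ is precisely the unique cycle color \emph{not} appearing on $Q$, so $v_1 v_2$ is not in the second set; thus $v_1 v_\ell$ is the only overlap forced by the structure, and the bound $2\ell - 3$ (rather than $2(\ell-1)$) comes out exactly as stated in the lemma. Beyond this small bookkeeping step, the proof is a direct application of the pigeonhole principle to the endpoints and colors of edges at $v_1$, and I do not anticipate a substantive obstacle.
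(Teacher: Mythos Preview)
Your proof is correct and follows essentially the same approach as the paper's: both observe that an edge from $v_i$ to a vertex off the cycle whose color is not among the cycle colors would extend the path $v_{i+1}v_{i+2}\cdots v_i$ to a rainbow-$P_\ell$. The paper phrases this as a direct partition---at most $\ell-1$ neighbors on $C$ and at most $\ell-2$ neighbors off $C$ (since off-$C$ edges must use one of the $\ell-2$ cycle colors not already at $v_i$)---while you reach the same $2\ell-3$ via inclusion--exclusion on endpoint and color constraints, but the underlying idea is identical.
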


\begin{proof}
Consider a vertex $v_i$ on a rainbow cycle $C=v_1v_2 \cdots v_\ell v_1$. Each edge $v_ix$ where $x$ is not on $C$ must be colored with a color used on an edge of $C$ (that is not incident to $v_i$) as otherwise we can construct a rainbow-$P_\ell$. Thus, there are at most $\ell-2$ such edges $v_ix$. Moreover, $v_i$ is adjacent to at most $\ell-1$ other vertices on $C$. Therefore, $d(v_i) \leq 2\ell-3$.
\end{proof}

In general a rainbow-$C_\ell$ cannot have many vertices of degree $2\ell-3$. In Section~\ref{small-bounds} we will make a deeper analysis of vertex degrees in the case when $\ell=3,4,5$ to prove a stronger result.

\begin{theorem}\label{general-upper}
For $\ell \geq 3$,
\[
\ex^*(n,C_\ell,P_\ell) \leq (2\ell-3)^{\ell-2} \cdot \ex^*(n,P_\ell) \leq c(\ell) n
\]
for some constant $c(\ell)$ depending on $\ell$.
\end{theorem}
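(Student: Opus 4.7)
The plan is to upper-bound the number of rainbow-$C_\ell$ copies containing any single edge of the host graph, then double-count over all edges. Let $G$ be a properly edge-colored $n$-vertex rainbow-$P_\ell$-free graph, and fix an edge $uv$ of $G$. Each rainbow-$C_\ell$ containing $uv$ corresponds to a rainbow path $u = v_0, v_1, \ldots, v_{\ell-1} = v$ of length $\ell-1$ between $u$ and $v$ that avoids the color of $uv$, and each such path determines a unique rainbow-$C_\ell$.

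To enumerate these paths, I would build them one vertex at a time. The key observation is that every vertex appearing on a rainbow-$C_\ell$ must, by Lemma~\ref{degree-lemma}, have degree at most $2\ell-3$. In particular, given any partial path $u = v_0, v_1, \ldots, v_i$ that extends to a rainbow-$C_\ell$, the next vertex $v_{i+1}$ must lie in $N(v_i)$, and $v_i$ is on a rainbow-$C_\ell$ and so has at most $2\ell-3$ neighbors. Thus at each of the $\ell-2$ intermediate steps (choosing $v_1,\ldots,v_{\ell-2}$; the terminus $v_{\ell-1}=v$ is then forced), we have at most $2\ell-3$ options. Hence the number of rainbow-$C_\ell$ copies containing $uv$ is at most $(2\ell-3)^{\ell-2}$.

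Summing over all edges of $G$ and using that every rainbow-$C_\ell$ is counted exactly $\ell$ times (once per edge), I obtain
\[
\ell \cdot \ex^*(n,C_\ell,P_\ell) \;\leq\; \sum_{e \in E(G)} \bigl|\{C \text{ rainbow-}C_\ell \text{ in } G : e \in C\}\bigr| \;\leq\; (2\ell-3)^{\ell-2} \cdot |E(G)|.
\]
Since $G$ is rainbow-$P_\ell$-free, $|E(G)| \le \ex^*(n,P_\ell)$, which already yields $\ex^*(n,C_\ell,P_\ell) \leq (2\ell-3)^{\ell-2} \cdot \ex^*(n,P_\ell)$ (in fact with a factor of $1/\ell$ to spare). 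The final linear bound follows immediately by plugging in the Ergemlidze--Gy\H{o}ri--Methuku bound $\ex^*(n,P_\ell) \leq \frac{9\ell+5}{7}n$ from Theorem~\ref{edge-bound}, giving $c(\ell) = (2\ell-3)^{\ell-2} \cdot \frac{9\ell+5}{7}$.

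There is no real obstacle here: the entire argument is an immediate application of Lemma~\ref{degree-lemma} combined with a standard edge-cycle double count, followed by invoking the known linear upper bound on $\ex^*(n,P_\ell)$. The only mild subtlety is recognizing that Lemma~\ref{degree-lemma} applies to \emph{all} vertices of the intended rainbow cycle—including the intermediate vertices we are trying to choose—so the degree bound legitimately caps the branching at each step of the path construction.
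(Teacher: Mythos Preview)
Your proof is correct and follows essentially the same approach as the paper: bound the number of rainbow-$C_\ell$ copies through a fixed edge by greedily extending a path using the degree bound of Lemma~\ref{degree-lemma}, then multiply by the number of edges and invoke Theorem~\ref{edge-bound}. You are in fact slightly more careful than the paper, since you explicitly track the double-counting factor of $\ell$ (which the paper simply drops) and write out an explicit value for $c(\ell)$.
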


\begin{proof}
Let $G$ be a properly edge-colored graph on $n$ vertices that does not contain a rainbow-$P_\ell$. Fix an edge $v_1v_2$ and bound the number of rainbow-$C_\ell$ copies containing $v_1v_2$. We may assume that $v_1v_2$ is contained in at least one rainbow-$C_\ell$.

Note that the number of rainbow-$C_\ell$ copies containing $v_1v_2$ is bounded above by the number of ways in which we can pick $\ell-1$ more edges to form a cycle $v_1v_2 \cdots v_\ell v_1$. By Lemma~\ref{degree-lemma}, $d(v) \leq 2\ell - 3$ for each $v$ in a cycle with $v_1v_2$, so there are at most $2\ell-3$
ways in which to chose each vertex. Therefore, the number of rainbow-$C_\ell$ copies containing $v_1v_2$ is bounded above by $(2\ell - 3)^{\ell-2}$. Therefore, the number of rainbow-$C_\ell$ copies is at most
\[
(2\ell - 3)^{\ell-2} \cdot \ex^*(n,P_\ell)
\]
which is linear in $n$ by Theorem~\ref{edge-bound}.
\end{proof}

\section{Asymptotic bounds}\label{small-bounds}
For small values of $\ell$, we can determine $\ex^*(n,C_\ell,P_\ell)$ exactly when $n$ is divisible by $2^{\ell-1}$. For the remaining values of $n$ this gives tight asymptotic bounds.

\begin{theorem}\label{length-3}
If $n$ is divisible by $4$, then
$\ex^*(n,C_3,P_3) = n$. 
\end{theorem}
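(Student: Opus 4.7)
The lower bound $\ex^*(n,C_3,P_3) \geq n$ is immediate from Theorem~\ref{lower-bound} applied with $\ell = 3$, since $4 \mid n$ by hypothesis. So the plan is to prove the matching upper bound $\ex^*(n,C_3,P_3) \leq n$, which will in fact hold for every $n$ (divisibility is only needed on the lower-bound side).

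The approach is a one-line double count powered by Lemma~\ref{degree-lemma}. Let $G$ be any properly edge-colored $n$-vertex graph with no rainbow-$P_3$, let $T$ be its number of rainbow triangles, and for each vertex $v$ let $t(v)$ denote the number of rainbow triangles containing $v$, so that $3T = \sum_{v} t(v)$. Lemma~\ref{degree-lemma} with $\ell = 3$ tells us that any vertex $v$ lying on at least one rainbow triangle satisfies $d(v) \leq 2\ell - 3 = 3$, and hence is contained in at most $\binom{d(v)}{2} \leq 3$ triangles of $G$ (rainbow or not), giving $t(v) \leq 3$. Vertices on no rainbow triangle contribute $0$. Summing over all $n$ vertices yields $3T \leq 3n$, i.e.\ $T \leq n$.

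There is essentially no obstacle here, which is why Theorem~\ref{length-3} is the easiest case of Theorem~\ref{main}: the degree bound from Lemma~\ref{degree-lemma} is already so strong when $\ell = 3$ that no further structural analysis is needed. It is worth noting that equality throughout the double count forces $d(v) = 3$ and $t(v) = 3$ for every vertex on a rainbow triangle, so the extremal configurations are precisely disjoint unions of properly $3$-edge-colored $K_4$'s, matching the construction $D_4^*$ from Theorem~\ref{lower-bound} and explaining why the divisibility $4 \mid n$ enters on the equality side.
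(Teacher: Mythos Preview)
Your proof is correct, but it takes a different route from the paper's. The paper counts rainbow triangles by \emph{edges}: it shows that any edge lies in at most two triangles (via an ad hoc degree argument), and then multiplies by the known edge bound $e(G)\le \ex^*(n,P_3)=\frac{3}{2}n$ from Theorem~\ref{path-edge-bounds} to get $\frac{1}{3}\cdot 2\cdot \frac{3}{2}n=n$. You instead count by \emph{vertices}, invoking Lemma~\ref{degree-lemma} directly to get $d(v)\le 3$ and hence $t(v)\le\binom{3}{2}=3$ for every vertex on a rainbow triangle. Your argument is more self-contained: it does not rely on the exact value of $\ex^*(n,P_3)$, and in fact your vertex-based scheme is precisely the one the paper is forced to adopt for $\ell=5$ (Theorem~\ref{length-5}), where no tight edge bound is available. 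The paper's edge-based approach, on the other hand, is what drives the $\ell=4$ case and makes the dependence on $\ex^*(n,P_\ell)$ explicit. One small remark: in your closing sentence, equality in $3T\le 3n$ actually forces $t(v)=3$ for \emph{every} vertex of $G$, not just those on a rainbow triangle, so every vertex lies on a rainbow triangle; your conclusion about disjoint $K_4$'s then follows as you say.
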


\begin{proof}
Theorem~\ref{lower-bound} gives $n = \frac{(3-1)!}{2}n \leq \ex^*(n, C_3,P_3)$.
For the upper bound, let $G$ be an $n$-vertex graph with a proper edge-coloring with no rainbow-$P_3$. Note that every $C_3$ is rainbow, so it suffices to count the number of triangles. Thus, let us count the number of triangles containing a fixed edge $xy$. We may assume that $xy$ is contained in at least one triangle, say $xyzx$. A triangle containing $xy$ which is distinct from $xyzx$ is of the form $xyvx$, and so if $xy$ is in two triangles, then $d(x) \geq 3$ and $d(y) \geq 3$ (since $xv$, $yv$, $xz$, $yz$, and $xy$ are all edges in $G$). Observe that to avoid a rainbow-$P_3$, $x,y$ and $z$ all must have degree at most $3$. Thus, if $xy$ is in two triangles, then no other edges of $G$ are incident to $x$ or $y$. Therefore, $xy$ is contained in at most two triangles.

For each edge $e$ of $G$, let $f(e)$ be the number of triangles containing $e$. So the number of triangles in $G$ is $\frac{1}{3}\sum_{e \in E(G)}f(e) \leq \frac{2}{3}e(G)$. Since $G$ is rainbow-$P_3$-free, we have $e(G) \leq \ex^*(n, P_3) = \frac{3}{2}n$ by Theorem~\ref{path-edge-bounds}. Therefore, $G$ contains at most $\frac{2}{3}\frac{3}{2}n = n$ (rainbow-)$C_3$ copies.
\end{proof}

The proof of $\ex^*(n,P_3)=\frac{3}{2}n$ in \cite{jps} implies that the only rainbow-$P_3$-free graphs attaining $\ex^*(n,P_3)$ are $3$-regular. This can be used to adapt the proof above to also show that the only rainbow-$P_3$-free graphs attaining $\ex^*(n,C_3,P_3)$ are $3$-regular.

\begin{theorem}\label{length-4}
If $n$ is divisible by $8$, then
$\ex^*(n,C_4,P_4) = 3n$. 
\end{theorem}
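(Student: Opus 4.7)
The plan is to adapt the per-edge counting used in Theorem~\ref{length-3}. Let $G$ be a properly edge-colored $n$-vertex graph with no rainbow-$P_4$, write $f(e)$ for the number of rainbow-$C_4$ copies through edge $e$, and observe that the total count of rainbow-$C_4$'s equals $\frac{1}{4}\sum_e f(e)$. Since $e(G) \leq \ex^*(n,P_4) = 2n$ by Theorem~\ref{path-edge-bounds}, proving the per-edge bound $f(e) \leq 6$ will yield $\frac{1}{4}\cdot 6 \cdot 2n = 3n$, matching the lower bound supplied by Theorem~\ref{lower-bound}.

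To bound $f(xy)$, I would first note that every rainbow-$C_4$ through $xy$ has the form $xyZWx$ and, by proper coloring, is uniquely determined by the ordered pair $(c(yZ), c(xW))$. Fix one reference rainbow-$C_4$ $C^1 = xyZWx$ through $xy$ (else $f(xy) = 0$), with colors $1,2,3,4$ on the edges $xy, yZ, ZW, xW$ respectively. The key claim I would then prove is that every rainbow-$C_4$ through $xy$ uses only colors from $S = \{1,2,3,4\}$. Granting this, the subgraph $G'$ of $G$ induced by edges colored from $S$ is properly $4$-edge-colored and still contains every rainbow-$C_4$ of $G$ through $xy$, so Lemma~\ref{k-edge-colors} applied with $k = \ell = 4$ gives $f(xy) \leq 3! = 6$.

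The bulk of the work would go into proving the color restriction. Suppose for contradiction that a rainbow-$C_4$ $C^2 = xyZ'W'x$ through $xy$ uses a color $c \notin S$, and split into cases according to which edge of $C^2$ bears $c$. If $c = c(xW')$, then the candidate path $W'xyZW$ has the four distinct colors $c,1,2,3$; properness at $x$ ensures $W' \neq W$, and $W' \neq Z$ follows unless the chord $xZ$ exists with color $c$, in which case an alternative path such as $Z'yxZW$ must be used instead. The case $c = c(yZ')$ is symmetric via $Z'yxWZ$. When $c$ sits only on the opposite edge $Z'W'$, both $c(xW')$ and $c(yZ')$ lie in $\{2,3,4\}$, and a sub-case analysis on these two values produces rainbow-$P_4$'s using paths such as $Z'W'xWZ$ or $WxyZ'W'$, combining edges of $C^1$ with the new color.

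The main obstacle will be this final sub-case analysis, particularly handling chord configurations where $xZ$ or $yW$ happens to be an edge of $G$; in such configurations the naive constructed paths may revisit a vertex, and alternative paths must be produced. For each combination of $c(xW')$ and $c(yZ')$ a specific rainbow-$P_4$ must be exhibited, and its five vertices verified distinct using properness of the coloring together with the structure imposed by $C^1$ and $C^2$. Once every sub-case is discharged, the color-restriction claim holds, Lemma~\ref{k-edge-colors} finishes the per-edge bound, and the upper bound $\ex^*(n,C_4,P_4) \leq 3n$ follows.
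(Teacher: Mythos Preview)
Your overall framework (bound $f(e)\le 6$, then combine with $e(G)\le 2n$) matches the paper, but your key color-restriction claim is simply false, so the proof cannot be completed as written. Take $G=K_4$ on vertices $x,y,z,w$ with all six edges given distinct colors. This graph is properly edge-colored and trivially contains no $P_4$ (it has only four vertices), yet the edge $xy$ lies in two rainbow-$C_4$'s: the reference cycle $xyzwx$ with colors $1,2,3,4$, and the cycle $xywzx$ with colors $1,c(yw),3,c(xz)$, where $c(yw),c(xz)\notin\{1,2,3,4\}$. So a rainbow-$C_4$ through $xy$ can use colors outside $S$, and no amount of sub-case analysis will produce a rainbow-$P_4$ in this example because none exists. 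You even flagged the chord configurations as ``the main obstacle'', and indeed this is exactly where the argument breaks down irreparably.

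The paper avoids this trap by \emph{not} claiming that every rainbow-$C_4$ through $xy$ is colored from $S$. Instead it argues in two pieces: (i) if a rainbow-$C_4$ through $xy$ uses exactly one color outside $S$, one sets up an injection into the $3!$ ordered color lists over $S$ by swapping the foreign color for the missing element of $S$, and checks that the swapped list cannot also be realized; (ii) if a rainbow-$C_4$ through $xy$ uses two colors outside $S$, both foreign edges are forced to be chords of the reference cycle, the whole configuration collapses to a $K_4$, and one counts directly that $xy$ lies in at most $3!$ rainbow-$C_4$'s. Either of these refinements would repair your argument; the injection in (i) in particular is close in spirit to your intended use of Lemma~\ref{k-edge-colors}.
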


\begin{proof}
Theorem~\ref{lower-bound} gives  $3n = \frac{(4-1)!}{2}n \leq  \ex^*(n,C_4,P_4)$. For the upper bound, let $G$ be an $n$-vertex graph with a proper $k$-edge-coloring $c$ with no rainbow-$P_4$. Fix an edge $xy$ of $G$. Without loss of generality, $c(xy) = 1$. We wish to find an upper bound on the number of rainbow-$C_4$ copies containing edge $xy$. We may assume that $xy$ is contained in a rainbow-$C_4$, say $xyzwx$, with edges colored $1,2,3,4$, respectively.

If every rainbow-$C_4$ containing $xy$ has its edges colored from $1,2,3,4$, then it follows from Lemma~\ref{k-edge-colors} that $xy$ is contained in at most $3!$ rainbow-$C_4$ copies. Now suppose that $xyuvx$ is a rainbow-$C_4$ containing $xy$ and exactly one edge is of a color not in  $\{1,2,3,4\}$, say $5$. Associate to this cycle the list $L = (c(xy), c(yu), c(uv), c(vx))$ of its edge colors. We can obtain a different list $L'$ by replacing the entry of color $5$ in $L$ by whichever element of $\{1,2,3,4\}$ is not represented in $L$. 
It is easy to see that $xy$ cannot be in rainbow-$C_4$ copies associated with both lists $L$ and $L'$.
 So if every rainbow-$C_4$ including $xy$ has at most one edge not colored from $\{1,2,3,4\}$, then the list of rainbow-$C_4$ copies containing $xy$ can be put in bijective correspondence with a (possibly proper) subset of the list of all possible rainbow-$C_4$ copies colored from $\{1,2,3,4\}$. Thus, if every rainbow-$C_4$ containing $xy$ has at most one edge not colored from $\{1,2,3,4\}$, then  $xy$ is in at most $3!$ rainbow-$C_4$ copies.

Now suppose (to the contrary) that $xyuvx$ is a rainbow-$C_4$ using two colors not in $\{1,2,3,4\}$, say $5$ and $6$. If an edge of color $5$ or $6$ is incident to exactly one vertex of $xyzw$, then we have a rainbow-$P_4$. Therefore, without loss of generality, we have $u=w$ and $v=z$ and edge colors $c(uw)=5$ and $c(xz)=6$. Any additional edge incident to $xyzw$ forms a rainbow-$P_4$, so there are at most $3!$ rainbow-$C_4$ copies using edge $xy$.

We now count rainbow-$C_4$ copies in $G$ by counting the rainbow-$C_4$ copies on each edge. Let $f(e)$ be the number of rainbow-$C_4$ copies on edge $e$ of $G$. Then, as $G$ is rainbow-$P_4$-free, we have $e(G) \leq \ex^*(n,P_4) = 2n$ by Theorem~\ref{path-edge-bounds}. Therefore, the number of rainbow-$C_4$ copies in $G$ is
\[\frac{1}{4}\sum_{e \in E(G)}f(e) \leq \frac{1}{4}3!e(G) \leq \frac{3!}{4}2n = 3n\]
as desired.
\end{proof}

An unpublished result of Halfpap \cite{Ha} states that the only rainbow-$P_4$-free graphs that attain $\ex^*(n,P_4)$ are $4$-regular. As in the case of $P_3$, this can be used to prove that the only rainbow-$P_4$-free graphs that attain 
 $\ex^*(n,C_4,P_4)$ are also $4$-regular.

Finally, we determine $\ex^*(n,C_5,P_5)$. Note that the proofs of Theorems~\ref{length-3} and \ref{length-4} relied on the bound on $\ex^*(n,P_\ell)$ given in Theorem~\ref{path-edge-bounds}. As $\ex^*(n,P_5)$ is not known exactly, we need a different approach. However, we start in the same way, by bounding the number of rainbow-$C_5$ copies on a fixed edge of a rainbow-$P_5$-free graph.

Throughout the proof of the following lemma and later theorem we will be required to examine many similar cases. Frequently, we will state that it is easy to see that we have a particular edge-coloring. This will involve the inspection of several potential colorings of an individual edge in a given figure and discarding those that lead to either a coloring that is not proper or to a rainbow-$P_5$. It would be excessive to list every possible case, so we leave some of the details to the reader.

\begin{lemma}\label{length-5-lemma}
Let $G$ be an $n$-vertex graph with a proper edge-coloring with no rainbow-$P_5$. Then each edge of $G$ is contained in at most $4!$ rainbow-$C_5$ copies.
\end{lemma}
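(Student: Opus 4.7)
The plan is to follow the strategy of Theorem~\ref{length-4}. Fix an edge $xy$ with $c(xy) = 1$, and assume $xy$ lies in at least one rainbow-$C_5$ (otherwise the bound is vacuous). Fix a reference rainbow-$C_5$ $C_0 = xyzwvx$ whose consecutive edges receive colors $1,2,3,4,5$. By the injectivity argument from the proof of Lemma~\ref{k-edge-colors}, any rainbow-$C_5$ through $xy$ is uniquely determined by its ordered color sequence $(1, c_1, c_2, c_3, c_4)$, so it suffices to bound the number of \emph{realized} such sequences.

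If every realized sequence uses only colors from $\{1,2,3,4,5\}$, then passing to the sub-coloring with just these five colors and applying Lemma~\ref{k-edge-colors} with $k=\ell=5$ gives the bound $4! = 24$ immediately. Otherwise, call a rainbow-$C_5$ through $xy$ \emph{exotic} if it uses a color outside $\{1,\ldots,5\}$. The strategy is to construct an injection $\phi$ from the set of exotic cycles through $xy$ into the set of $\{1,\ldots,5\}$-sequences that are \emph{not} realized as color sequences of rainbow-$C_5$ copies through $xy$; granted such $\phi$, the total count of realized sequences, exotic plus standard, is at most $4!$.

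To define $\phi$, for each exotic cycle $C' = xya_1a_2a_3x$ I identify, under a fixed canonical rule, the extra-colored edges of $C'$ and swap their colors with the standard colors missing from $C'$ (for instance, pair up extra-colored edges in cyclic order around $C'$, starting from $ya_1$, with missing standard colors in numerical order). The verification then splits into two case analyses: (a) the swapped sequence $\phi(C')$ is not realized as a rainbow-$C_5$ through $xy$; (b) the map $\phi$ is injective. The common template in both cases is that if two rainbow-$C_5$ copies through $xy$ are forced to coexist with overlapping structure, one can splice subpaths together (possibly using an arc of $C_0$) to produce either a proper-coloring violation or a rainbow-$P_5$, contradicting our hypotheses. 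A key preliminary observation driving this is that any extra-colored edge of an exotic cycle incident to $x$ or $y$ must have its other endpoint in $V(C_0) = \{x,y,z,w,v\}$: otherwise one can extend along $C_0$ with the remaining four colors to form a rainbow-$P_5$. This substantially restricts the structure of exotic cycles near $x$ and $y$.

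The main obstacle is the case analysis when $C'$ uses two or more extra colors: the canonical swap reorders several entries and (a) and (b) must be checked across many configurations. I expect that such multi-extra-color cycles are so constrained---their vertex sets nearly coincide with $V(C_0)$, since multiple extra-colored edges force multiple chord-like incidences back into $C_0$---that only finitely many configurations need to be inspected. Each such configuration should be dispatched by producing either a rainbow-$P_5$ (by concatenating an extra-colored chord with an arc of $C_0$ that uses the complementary colors) or a direct proper-coloring violation at a vertex of $C_0$. The paper's own remark that "many similar cases" arise suggests exactly this pattern: the individual verifications are routine, but the aggregate bookkeeping is the delicate part.
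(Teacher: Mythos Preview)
Your proposal tracks the paper's proof closely through the one-extra-color case: fix a reference rainbow-$C_5$ with colors $\{1,\dots,5\}$, invoke Lemma~\ref{k-edge-colors} when only standard colors appear, and for a single extra color swap it with the missing standard color and argue the swapped sequence is unrealized. That part is fine and matches the paper exactly.

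The divergence---and the gap---is in the two-extra-color case. You propose to keep the injection framework and define $\phi$ by swapping \emph{all} extra colors with the missing standard colors under a canonical pairing rule, then verify (a) unrealized and (b) injective by case inspection. But neither property follows from the mechanism that handled one extra color. The one-extra argument works because forward uniqueness from $y$ and backward uniqueness from $x$ pin down every vertex except the one sandwiched by the single swapped edge, forcing a proper-coloring clash. With two extra colors at adjacent positions, those uniqueness arguments leave an entire interior vertex unpinned, so no proper-coloring contradiction is forced and (a) is genuinely open. Injectivity (b) is also unclear once cycles with different \emph{numbers} of extra colors can map to the same standard sequence under your rule; you would need a separate argument ruling out such collisions, and that argument is not supplied.

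The paper does not try to rescue the injection here. Instead it proves a stronger structural fact: if some rainbow-$C_5$ through $v_1v_2$ uses two colors outside $\{1,\dots,5\}$, then \emph{both} extra-colored edges must be chords of the reference cycle $C$ (not merely the ones incident to $v_1$ or $v_2$, which is all your preliminary observation gives). This reduces to six chord configurations up to symmetry, two of which cannot even form a $C_5$ through $v_1v_2$; in each of the remaining four, a direct count shows the \emph{total} number of rainbow-$C_5$ copies through $v_1v_2$ (standard and exotic together) is at most $3!+8<4!$. So the two-extra case is handled by a global structural bound, not by extending the bijection. Your instinct that multi-extra cycles are ``so constrained\dots their vertex sets nearly coincide with $V(C_0)$'' is exactly right, but the payoff is a direct count, not a verification of (a) and (b).
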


\begin{proof}
Let us count the number of rainbow-$C_5$ copies in $G$ containing edge  $v_1v_2$. 
We may assume that $v_1v_2$ is in at least one rainbow-$C_5$, say $C=v_1v_2v_3v_4v_5v_1$, whose edges are colored (in order) $1,2,3,4,5$. Note that if every rainbow-$C_5$ containing $v_1v_2$ is colored from $\{1,2,3,4,5\}$, then, by Lemma~\ref{k-edge-colors}, at most $4!$ rainbow-$C_5$ copies in $G$ contain $v_1v_2$. 
An analogous argument to that in the proof of Theorem~\ref{length-4} shows that if every rainbow-$C_5$ containing $v_1v_2$ contains at most one edge not colored from $\{1,2,3,4,5\}$, then at most $4!$ rainbow-$C_5$ copies in $G$ contain $v_1v_2$.

Now suppose that $v_1v_2$ is contained in a rainbow-$C_5$, say $C'$, which contains at least two edges not colored from $\{1,2,3,4,5\}$. We claim that these two edges must be chords of $C$. We write $C' = v_1v_2xyzv_1$, allowing $x,y,$ and $z$ to equal $v_3, v_4,$ or $v_5$. We know that two edges of $C'$ are not colored from $\{1,2,3,4,5\}$; say their colors are $6$ and $7$. We note that to avoid a rainbow-$P_5$, the edges colored $6$ and $7$ must either be chords of $C$ or share no vertices with $C$. So if neither the edge colored $6$ nor the edge colored $7$ is a chord of $C$, then without loss of generality, $c(xy) = 6$, $c(yz) = 7$, and $x,y,$ and $z$ are not equal to $v_3$,$v_4$, or $v_5$. The situation is then as below.

\begin{center}
\begin{tikzpicture}
\filldraw (-1,0) circle(0.05 cm);
\filldraw (1,0) circle(0.05 cm);
\filldraw (-1,1) circle(0.05 cm);
\filldraw (1,1) circle(0.05 cm);
\filldraw (0,2) circle(0.05 cm);
\filldraw (-1,-1) circle(0.05 cm);
\filldraw (1,-1) circle(0.05 cm);
\filldraw (0,-2) circle(0.05 cm);
\draw (-1,0) -- (1,0) node[pos=0.5, below] {1};
\draw (-1,0) -- (-1,1) node[pos=0.5, left] {2};
\draw (1,0) -- (1,1) node[pos=0.5, right] {5};
\draw (-1,1) -- (0,2) node[pos=0.6, left] {3};
\draw (1,1) -- (0,2) node[pos=0.6, right] {4};
\draw (-1,0) -- (-1,-1);
\draw (1,0) -- (1,-1);
\draw (-1,-1) -- (0,-2) node[pos=0.6, left ] {6};
\draw (1,-1) -- (0,-2) node[pos=0.6, right] {7};
\draw (1,0) node[right] {$v_1$};
\draw (-1,0) node[left] {$v_2$};
\draw (-1,1) node[left] {$v_3$};
\draw (0,2) node[above] {$v_4$};
\draw (1,1) node[right] {$v_5$};
\draw (-1,-1) node[left] {$x$};
\draw (0,-2) node[below] {$y$};
\draw (1,-1) node[right] {$z$};
\end{tikzpicture}
\end{center}
It is clear that any choice of $c(v_1z)$ yields a rainbow-$P_5$. So either the edge colored $6$ or the edge colored $7$ is a chord. Without loss of generality, the edge colored $6$ is a chord. Now suppose that the edge colored $7$ is not. It is easy to see that one of the two cases pictured below must occur; dashed edges represent the two possible placements for the chord of color $6$.

\begin{center}
\begin{tikzpicture}
\filldraw (-1,0) circle(0.05 cm);
\filldraw (1,0) circle(0.05 cm);
\filldraw (-1,1) circle(0.05 cm);
\filldraw (1,1) circle(0.05 cm);
\filldraw (0,2) circle(0.05 cm);
\draw (-1,0) -- (1,0) node[pos=0.5, below] {1};
\draw (-1,0) -- (-1,1) node[pos=0.5, left] {2};
\draw (1,0) -- (1,1) node[pos=0.5, right] {5};
\draw (-1,1) -- (0,2) node[pos=0.6, left] {3};
\draw (1,1) -- (0,2) node[pos=0.6, right] {4};
\draw (1,0) node[right] {$v_1$};
\draw (-1,0) node[left] {$v_2$};
\draw (-1,1) node[left] {$v_3$};
\draw (0,2) node[above] {$v_4$};
\draw (1,1) node[right] {$v_5$};
\filldraw (2,-1) circle (0.05 cm);
\filldraw (3,-2) circle (0.05 cm);
\draw (1,0) -- (2,-1);
\draw (2,-1) -- (3,-2)node[pos=0.4, below] {7};
\draw[dashed] (-1,0) -- (1,1);
\draw[dashed] (-1,0) -- (0,2);
\draw (0,3) node{\textbf{Case 1}};

\filldraw (4,0) circle(0.05 cm);
\filldraw (6,0) circle(0.05 cm);
\filldraw (4,1) circle(0.05 cm);
\filldraw (6,1) circle(0.05 cm);
\filldraw (5,2) circle(0.05 cm);
\draw (4,0) -- (6,0) node[pos=0.5, below] {1};
\draw (4,0) -- (4,1) node[pos=0.5, left] {2};
\draw (6,0) -- (6,1) node[pos=0.5, right] {5};
\draw (4,1) -- (5,2) node[pos=0.6, left] {3};
\draw (6,1) -- (5,2) node[pos=0.6, right] {4};
\draw (6,0) node[right] {$v_1$};
\draw (4,0) node[left] {$v_2$};
\draw (4,1) node[left] {$v_3$};
\draw (5,2) node[above] {$v_4$};
\draw (6,1) node[right] {$v_5$};
\filldraw (5, -1) circle (0.05 cm);
\filldraw (6,-2) circle (0.05 cm);
\draw (4,0) -- (5,-1);
\draw (5,-1) -- (6,-2) node[pos=0.4, below]{7};
\draw[dashed] (6,0) -- (4,1);
\draw[dashed] (6,0) -- (5,2);
\draw (5,3) node{\textbf{Case 2}};
\end{tikzpicture}
\end{center}

Note that the two cases are analogous by symmetry, so we only need to examine Case 1. Regardless of which choice we make for chord placement, an easy inspection shows that any coloring of the outgoing edge from $v_1$ results in a rainbow-$P_5$. Thus, we conclude that the edges colored $6$ and $7$ are chords of $C$.

We shall now show that $v_1v_2$ is contained in at most $4!$ rainbow-$C_5$ copies.
There are $\binom{5}{2} = 10$ ways in which to place the chords within $C$; up to symmetry, six are distinct. They are pictured below.

\begin{center}
\begin{tikzpicture}
\filldraw (-1,0) circle(0.05 cm);
\filldraw (1,0) circle(0.05 cm);
\filldraw (-1,1) circle(0.05 cm);
\filldraw (1,1) circle(0.05 cm);
\filldraw (0,2) circle(0.05 cm);
\draw (-1,0) -- (1,0) node[pos=0.5, below] {1};
\draw (-1,0) -- (-1,1) node[pos=0.5, left] {2};
\draw (1,0) -- (1,1) node[pos=0.5, right] {5};
\draw (-1,1) -- (0,2) node[pos=0.6, left] {3};
\draw (1,1) -- (0,2) node[pos=0.6, right] {4};
\draw (1,0) node[right] {$v_1$};
\draw (-1,0) node[left] {$v_2$};
\draw (-1,1) node[left] {$v_3$};
\draw (0,2) node[above] {$v_4$};
\draw (1,1) node[right] {$v_5$};
\draw (1,0) -- (-1,1) node[pos=0.6, below]{6};
\draw (1,0) -- (0,2) node[pos=0.7, below left]{7};
\draw (0,3) node{\textbf{Configuration 1}};

\filldraw (4,0) circle(0.05 cm);
\filldraw (6,0) circle(0.05 cm);
\filldraw (4,1) circle(0.05 cm);
\filldraw (6,1) circle(0.05 cm);
\filldraw (5,2) circle(0.05 cm);
\draw (4,0) -- (6,0) node[pos=0.5, below] {1};
\draw (4,0) -- (4,1) node[pos=0.5, left] {2};
\draw (6,0) -- (6,1) node[pos=0.5, right] {5};
\draw (4,1) -- (5,2) node[pos=0.6, left] {3};
\draw (6,1) -- (5,2) node[pos=0.6, right] {4};
\draw (6,0) node[right] {$v_1$};
\draw (4,0) node[left] {$v_2$};
\draw (4,1) node[left] {$v_3$};
\draw (5,2) node[above] {$v_4$};
\draw (6,1) node[right] {$v_5$};
\draw (4,0) -- (5,2) node[pos=0.4, right]{7};
\draw (6,0) -- (5,2) node[pos=0.4, left]{6};
\draw (5,3) node{\textbf{Configuration 2}};

\filldraw (9,0) circle(0.05 cm);
\filldraw (11,0) circle(0.05 cm);
\filldraw (9,1) circle(0.05 cm);
\filldraw (11,1) circle(0.05 cm);
\filldraw (10,2) circle(0.05 cm);
\draw (9,0) -- (11,0) node[pos=0.5, below] {1};
\draw (9,0) -- (9,1) node[pos=0.5, left] {2};
\draw (11,0) -- (11,1) node[pos=0.5, right] {5};
\draw (9,1) -- (10,2) node[pos=0.6, left] {3};
\draw (11,1) -- (10,2) node[pos=0.6, right] {4};
\draw (11,0) node[right] {$v_1$};
\draw (9,0) node[left] {$v_2$};
\draw (9,1) node[left] {$v_3$};
\draw (10,2) node[above] {$v_4$};
\draw (11,1) node[right] {$v_5$};
\draw (11,0) -- (9,1) node[pos=0.6, below left]{6};
\draw (9,1) -- (11,1) node[pos=0.5, above]{7};
\draw (10,3) node{\textbf{Configuration 3}};

\filldraw (-1,-5) circle(0.05 cm);
\filldraw (1,-5) circle(0.05 cm);
\filldraw (-1,-4) circle(0.05 cm);
\filldraw (1,-4) circle(0.05 cm);
\filldraw (0,-3) circle(0.05 cm);
\draw (-1,-5) -- (1,-5) node[pos=0.5, below] {1};
\draw (-1,-5) -- (-1,-4) node[pos=0.5, left] {2};
\draw (1,-5) -- (1,-4) node[pos=0.5, right] {5};
\draw (-1,-4) -- (0,-3) node[pos=0.6, left] {3};
\draw (1,-4) -- (0,-3) node[pos=0.6, right] {4};
\draw (1,-5) node[right] {$v_1$};
\draw (-1,-5) node[left] {$v_2$};
\draw (-1,-4) node[left] {$v_3$};
\draw (0,-3) node[above] {$v_4$};
\draw (1,-4) node[right] {$v_5$};
\draw (1,-5) -- (0,-3) node[pos=0.4, below left]{6};
\draw (-1,-4) -- (1,-4) node[pos=0.4, below]{7};
\draw (0,-2) node{\textbf{Configuration 4}};

\filldraw (4,-5) circle(0.05 cm);
\filldraw (6,-5) circle(0.05 cm);
\filldraw (4,-4) circle(0.05 cm);
\filldraw (6,-4) circle(0.05 cm);
\filldraw (5,-3) circle(0.05 cm);
\draw (4,-5) -- (6,-5) node[pos=0.5, below] {1};
\draw (4,-5) -- (4,-4) node[pos=0.5, left] {2};
\draw (6,-5) -- (6,-4) node[pos=0.5, right] {5};
\draw (4,-4) -- (5,-3) node[pos=0.6, left] {3};
\draw (6,-4) -- (5,-3) node[pos=0.6, right] {4};
\draw (6,-5) node[right] {$v_1$};
\draw (4,-5) node[left] {$v_2$};
\draw (4,-4) node[left] {$v_3$};
\draw (5,-3) node[above] {$v_4$};
\draw (6,-4) node[right] {$v_5$};
\draw (4,-5) -- (6,-4) node[pos=0.7, above]{7};
\draw (6,-5) -- (4,-4) node[pos=0.7, above]{6};
\draw (5,-2) node{\textbf{Configuration 5}};

\filldraw (9,-5) circle(0.05 cm);
\filldraw (11,-5) circle(0.05 cm);
\filldraw (9,-4) circle(0.05 cm);
\filldraw (11,-4) circle(0.05 cm);
\filldraw (10,-3) circle(0.05 cm);
\draw (9,-5) -- (11,-5) node[pos=0.5, below] {1};
\draw (9,-5) -- (9,-4) node[pos=0.5, left] {2};
\draw (11,-5) -- (11,-4) node[pos=0.5, right] {5};
\draw (9,-4) -- (10,-3) node[pos=0.6, left] {3};
\draw (11,-4) -- (10,-3) node[pos=0.6, right] {4};
\draw (11,-5) node[right] {$v_1$};
\draw (9,-5) node[left] {$v_2$};
\draw (9,-4) node[left] {$v_3$};
\draw (10,-3) node[above] {$v_4$};
\draw (11,-4) node[right] {$v_5$};
\draw (11,-5) -- (9,-4) node[pos=0.3, above]{6};
\draw (9,-5) -- (10,-3) node[pos=0.7, right]{7};
\draw (10,-2) node{\textbf{Configuration 6}};
\end{tikzpicture}
\end{center}

We need not consider Configuration $1$ or $2$, since it is clear that chords placed in these configurations cannot form a $C_5$ containing $v_1v_2$. In the remaining four configurations, we will show that $v_1v_2$ is contained in at most $4!$ rainbow-$C_5$ copies. In these arguments, we will use the fact that $v_1v_2$ is contained in at most $3!$ rainbow-$C_5$ copies which contain only vertices from $C$ (since there are $3!$ ways to permute $v_3,v_4,$ and $v_5$). We will also require the observation that given five vertices and three fixed edges among them, there are (at most) two ways in which to add another two edges to create a $C_5$. We also repeatedly use the fact that there is no edge with one vertex incident to $C$ that is colored with a color not in $\{1,2,3,4,5\}$ as this results in a rainbow-$P_5$.

We first consider Configuration $3$. If $v_1v_2$ is on a rainbow-$C_5$ containing both of the pictured chords, then this $C_5$ is of the form $v_2v_1v_3v_5uv_2$, where $u$ is either equal to $v_4$ or to some vertex not on $C$. If $u \neq v_4$, then the situation is as pictured below

\begin{center}
\begin{tikzpicture}
\filldraw (9,0) circle(0.05 cm);
\filldraw (11,0) circle(0.05 cm);
\filldraw (9,1) circle(0.05 cm);
\filldraw (11,1) circle(0.05 cm);
\filldraw (10,2) circle(0.05 cm);
\draw (9,0) -- (11,0) node[pos=0.5, below] {1};
\draw (9,0) -- (9,1) node[pos=0.5, left] {2};
\draw (11,0) -- (11,1) node[pos=0.5, right] {5};
\draw (9,1) -- (10,2) node[pos=0.6, left] {3};
\draw (11,1) -- (10,2) node[pos=0.6, right] {4};
\draw (11,0) node[right] {$v_1$};
\draw (9,0) node[left] {$v_2$};
\draw (9,1) node[left] {$v_3$};
\draw (10,2) node[above] {$v_4$};
\draw (11,1) node[above right] {$v_5$};
\draw (11,0) -- (9,1) node[pos=0.6, below left]{6};
\draw (9,1) -- (11,1) node[pos=0.5, above]{7};
\filldraw (13,1) circle(0.05 cm);
\draw (13,1) node[right]{$u$};
\draw (11,1) -- (13,1);
\draw (13,1) .. controls (12,-1) and (10,-1) ..(9,0);
\end{tikzpicture}
\end{center}
Because our coloring must be proper, $c(v_2u)$ is not $1$ or $2$. In order to avoid a rainbow-$P_5$, it is clear that $c(v_2u)$ must be in $\{1,2,3,4,5\}$. However, if $c(v_2u) \in \{3,4,5\}$, then either $uv_2v_1v_3v_5v_4$ or $uv_2v_1v_5v_3v_4$ is a rainbow-$P_5$. So we must have $u = v_4$. Thus, if the chords placed in Configuration $3$ yield a rainbow-$C_5$ containing $v_1v_2$, then that rainbow-$C_5$ is $v_2v_1v_3v_5v_4v_2$, as drawn below.

\begin{center}
\begin{tikzpicture}
\filldraw (9,0) circle(0.05 cm);
\filldraw (11,0) circle(0.05 cm);
\filldraw (9,1) circle(0.05 cm);
\filldraw (11,1) circle(0.05 cm);
\filldraw (10,2) circle(0.05 cm);
\draw (9,0) -- (11,0) node[pos=0.5, below] {1};
\draw (9,0) -- (9,1) node[pos=0.5, left] {2};
\draw (11,0) -- (11,1) node[pos=0.5, right] {5};
\draw (9,1) -- (10,2) node[pos=0.6, left] {3};
\draw (11,1) -- (10,2) node[pos=0.6, right] {4};
\draw (11,0) node[right] {$v_1$};
\draw (9,0) node[left] {$v_2$};
\draw (9,1) node[left] {$v_3$};
\draw (10,2) node[above] {$v_4$};
\draw (11,1) node[above right] {$v_5$};
\draw (11,0) -- (9,1) node[pos=0.6, below left]{6};
\draw (9,1) -- (11,1) node[pos=0.5, above]{7};
\draw (10,2) .. controls (8,1.5) and (8,.5) ..(9,0);
\end{tikzpicture}
\end{center}

Note that to ensure that the coloring is proper and that $v_2v_1v_3v_5v_4v_2$ is a rainbow-$C_5$, we must have $c(v_2v_4) = 5$ or $c(v_2v_4)$ is a color not yet used, say $8$. Now, inspect $v_1, v_2, v_4,$ and $v_5$. It is easy (but somewhat tedious) to check that none of these vertices may be adjacent to any vertex $u$ which is not on $C$; any color choice for such an edge will result in a rainbow-$P_5$ given the above configuration and regardless of whether $c(v_2v_4)$ is chosen to equal $5$ or $8$. Thus, $v_1v_2$ lies in no cycle except those using only vertices from $C$. Hence, $v_1v_2$ is contained in at most $3! < 4!$ rainbow-$C_5$ copies.

In Configuration $4$, we observe that $v_2$ and $v_5$ can only be adjacent to vertices on $C$, 
and that if $v_4$ is incident to an edge whose other endpoint is not on $C$, then that edge must be colored $1$. So if a rainbow-$C_5$ contains $v_1v_2$ and uses vertices not on $C$, then it must include edges of the form $v_1u$ and $v_3w$ where $u$ and $w$ are not on $C$ (although we allow $u = w$). We observe that $c(v_3w)$ cannot equal $5$, so we must have $c(v_3w) = 4$ if the cycle is to be rainbow. This forces $c(v_1u) = 2$, since $c(v_1u)$ cannot equal $3$. The number of rainbow-$C_5$ copies containing $v_1u$ and $v_3w$ is at most $2$, since if $u \neq w$, then we have specified all five vertices of the cycle and three of its edges, so there are only two ways to add the remaining two edges. If $u = w$, then the fifth vertex of the cycle must be on $C$, and there are two choices for this vertex. So in configuration $4$, $v_1v_2$ is on at most $3! + 2 < 4!$ rainbow-$C_5$ copies.

In Configuration $5$, we observe that $v_1$, $v_2$, and $v_4$ are adjacent only to vertices on $C$. Also, if $v_3u$ is an edge with $u$ not on $C$, then $c(v_3u) \in \{1,4\}$, and if $v_5w$ is an edge with $w$ not on $C$, then $c(v_5w) \in \{1,3\}$. So the only possible rainbow-$C_5$ copies containing $v_1v_2$ and vertices not on $C$ contain edges $v_3u$ and $v_5w$ with $c(v_3u) = 4$ and $c(v_5w) = 3$. Note that, in order to have exactly five vertices, we must have $u = w$. We have now specified all five vertices and three edges of a cycle, so there are at most two ways to add edges to create a $C_5$. Hence, there are at most $3! + 2 < 4!$ containing $v_1v_2$ in Configuration $5$.

In Configuration $6$, we note that $v_2$, $v_3$, and $v_5$ are only adjacent to vertices on $C$. If $v_1u$ is an edge with $u$ not on $C$, then $c(v_1u) \in \{2,4\}$, and if $v_4w$ is an edge with $w$ not on $C$, then $c(v_4w) \in \{2,5\}$. Thus, the only possible rainbow-$C_5$ copies containing $v_1v_2$ and some vertex not on $C$ use a pair of edges $v_1u$ and $v_4w$. Since each of $v_1,v_4$ can have at most two neighbors not on $C$, and at most two cycles can be formed which include $v_1v_2$ and a fixed pair of edges $v_1u$ and $v_4w$, the edge $v_1v_2$ is contained in at most $3! + 8 < 4!$ rainbow-$C_5$ copies.

Thus, if $v_1v_2$ is contained in a rainbow-$C_5$ which uses two colors not in $\{1,2,3,4,5\}$, then $v_1v_2$ is contained in strictly fewer than $4!$ rainbow-$C_5$ copies.
\end{proof}

We may immediately apply Lemma~\ref{length-5-lemma} to get 
$\ex^*(n, C_5, P_5) \leq \frac{4!}{5} \ex^*(n,P_5)$. If we could show that $\ex^*(n,P_5)$ was $\frac{5}{2}n$, then Lemma~\ref{length-5-lemma} would give the desired bound on $\ex^*(n,C_5,P_5)$. Unfortunately, this is not known. However, we can give a new upper bound on $\ex^*(n,P_5)$ which combined with Lemma~\ref{length-5-lemma} gives $\ex^*(n,C_5,P_5) \leq \frac{4!}{5} \cdot 4n = 19.2n$.

\begin{theorem}
$\ex^*(n,P_5) \leq 4n$.
\end{theorem}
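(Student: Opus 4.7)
The plan is to prove the theorem by induction on $n$, with inductive hypothesis $\ex^*(m, P_5) \leq 4m$ for all $m < n$; the base cases (small $n$) are trivial. For the inductive step, let $G$ be an $n$-vertex properly edge-colored graph with no rainbow $P_5$. If some vertex $v$ of $G$ satisfies $d(v) \leq 4$, the inductive hypothesis gives
\[
e(G) \leq e(G-v) + d(v) \leq 4(n-1) + 4 = 4n,
\]
so I may assume $\delta(G) \geq 5$. The goal then becomes showing the even stronger bound $e(G) \leq \tfrac{7n}{2}$ in this regime.

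Since $\delta(G) \geq 5$ forces $e(G) \geq \tfrac{5n}{2}$, and this exceeds $\ex^*(n, P_4) \leq 2n + O(1)$ by Theorem~\ref{path-edge-bounds}, the graph $G$ contains a rainbow $P_4$; being rainbow-$P_5$-free, its longest rainbow path has length exactly $4$. For any such longest rainbow path $P = v_0 v_1 v_2 v_3 v_4$ with edge colors $c_1, c_2, c_3, c_4$, the rainbow-$P_5$-extension argument used throughout Lemma~\ref{length-5-lemma} shows that any $u \in N(v_0) \setminus V(P)$ must have $c(v_0 u) \in \{c_2, c_3, c_4\}$, else $u v_0 v_1 v_2 v_3 v_4$ would be a rainbow $P_5$. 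Combined with the at most $4$ on-path neighbors, this yields $d(v_0) \leq 7$, and the identical reasoning applies to \emph{any} vertex $v$ that serves as an endpoint of some rainbow $P_4$ in $G$.

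The crux of the proof is then to show that every vertex $v$ of $G$ is the endpoint of some rainbow $P_4$. Once this is proven, $d(v) \leq 7$ holds for every $v$, giving $e(G) \leq \tfrac{7n}{2} < 4n$ and closing the induction. Given any $v$, a greedy extension using $\delta(G) \geq 5$ immediately produces a rainbow $P_3$ starting at $v$, say $v v_1 v_2 v_3$ (at each step the number of forbidden vertices plus same-color edges is at most $4$, so some extension is available). The delicate question is extending to a rainbow $P_4$: if the greedy extension at $v_3$ fails, then every $w \in N(v_3) \setminus \{v, v_1, v_2\}$ satisfies $c(v_3 w) \in \{c(v v_1), c(v_1 v_2)\}$ (proper coloring excludes $c(v_2 v_3)$), pinning down $d(v_3) = 5$ and forcing a very rigid neighborhood at $v_3$. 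By varying the initial choices of $v_1, v_2, v_3$ over the many ($\geq 5 \cdot 4 \cdot 2$) rainbow $P_3$'s from $v$ and combining the resulting neighborhood rigidity, one derives either a direct contradiction with proper coloring or uncovers a rerouted rainbow $P_3$ whose terminal vertex extends, producing a rainbow $P_4$ endpoint at $v$.

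The main obstacle is exactly this endpoint-of-$P_4$ claim. Its subtlety is highlighted by the lower-bound construction $D_{16}^*$ of Theorem~\ref{lower-bound}: $D_{16}^*$ has $\delta = 5$ and no rainbow $P_5$, so the inductive assumption $\delta(G) \geq 5$ alone does not suffice to force a rainbow $P_5$; however, every vertex of $D_{16}^*$ \emph{is} an endpoint of many rainbow $P_4$'s (by symmetry), which is exactly the property I need in general. Proving this without the symmetry of $D_{16}^*$ requires a configuration-style case analysis on the neighborhoods forced by ``blocked'' rainbow $P_3$'s, in the same spirit as the chord-configuration casework that proves Lemma~\ref{length-5-lemma}.
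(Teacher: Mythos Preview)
Your framework is sound and essentially equivalent to the paper's, just reorganized. The paper argues directly: assuming $e(G) > 4n$, it passes to a subgraph $G'$ with $\delta(G') \geq 5$ and average degree still $> 8$, hence containing a vertex $v$ with $d_{G'}(v) \geq 9$; it then shows $v$ must be the endpoint of a rainbow $P_4$ in $G'$, which (since $d(v) \geq 9 > 7$) extends to a rainbow $P_5$. Your induction with the $\Delta \leq 7$ bound is the contrapositive of that last step, and both approaches reduce to the claim you correctly flag as the crux: in a properly edge-colored graph with $\delta \geq 5$, every vertex is the endpoint of some rainbow $P_4$.

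Where your proposal has a gap is that you do not actually prove this claim, and you overestimate its difficulty by anticipating Lemma~\ref{length-5-lemma}-style casework. In fact it falls out in a few lines from the setup you already have. With the blocked path $v v_1 v_2 v_3$ (colors $1,2,3$), you correctly deduce $d(v_3)=5$ with $v_3$ adjacent to all of $v,v_1,v_2$; the two forced back-edges $v_3v_1$ and $v_3v$ must then carry new colors, say $4$ and $5$. Now $v\,v_3\,v_1\,v_2$ (colors $5,4,2$) and $v\,v_1\,v_3\,v_2$ (colors $1,4,3$) are two further rainbow $P_3$'s from $v$, both terminating at $v_2$. Since $d(v_2)\geq 5$, there exist two neighbors $y_1,y_2$ of $v_2$ outside $\{v,v_1,v_3\}$; if neither rerouted path extends through $y_i$, then $c(v_2y_i)\in\{2,4,5\}\cap\{1,3,4\}=\{4\}$ for each $i$, contradicting properness. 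So the decisive move is not to vary the initial greedy choices but to exploit the forced back-edges at $v_3$---precisely the ``rerouted rainbow $P_3$'' you allude to without pinning down.
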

\begin{proof}
Let $G$ be an $n$-vertex graph with a proper edge-coloring and more than $4n$ edges. We will show that $G$ contains a rainbow-$P_5$.
The average degree of $G$ is greater than $8$. By removing low degree vertices, we can obtain a subgraph $G'$ of $G$ with minimum degree at least $5$ and average degree greater than $8$. In particular, $G'$ has a vertex, say $v$, of degree at least $9$.

\smallskip

\textbf{Case 1: }  $G'$ contains a rainbow-$P_4$ ending at $v$. 

\smallskip

Let $P=vxyzw$ be a rainbow-$P_4$ ending at $v$. Since $d(v) \geq 9$, $v$ must be adjacent to at least $5$ vertices not on $P$. Since the coloring of $G'$ is proper, none of these five edges receives the same color as $vx$. Three may receive the colors used for $xy$, $yz$, and $zw$, but two must receive colors not used in $P$. Either of these two edges will extend $P$ to a rainbow-$P_5$.

\smallskip

\textbf{Case 2:}  $G'$ does not contain a rainbow-$P_4$ ending at $v$.

\smallskip

Using the fact that the minimum degree in $G'$ is at least $5$, we can greedily build a rainbow path of length $3$ ending at $v$; moreover, since this path does not extend to a rainbow-$P_4$, then the situation must be as pictured below:

\begin{center}
\begin{tikzpicture}
\filldraw (0,0) circle(0.05 cm) node[below]{$v$};
\filldraw (2,0) circle(0.05 cm) node[below]{$x$};
\filldraw (4,0) circle(0.05 cm) node[below]{$y$};
\filldraw (6,0) circle(0.05 cm) node[below]{$z$};
\draw (0,0) -- (6,0);
\draw (1, 0) node[above]{$1$};
\draw (3, 0) node[above]{$2$};
\draw (5, 0) node[above]{$3$};
\filldraw (8,1) circle(0.05 cm) node[right]{$z_1$};
\filldraw (8,-1) circle(0.05 cm) node[right]{$z_2$};
\draw (6,0) -- (8,1) node[pos=0.5, above]{1};
\draw (6,0) -- (8,-1)node[pos=0.5, above]{2};
\draw (6,0) arc (0:180:2);
\draw (6,0) arc (0:180:3);
\draw (2,1) node{$4$};
\draw (0.5,2) node{$5$};
\end{tikzpicture}
\end{center}

Consider the vertex $y$. Since $d(y) \geq 5$, $y$ must be adjacent to at least two vertices not on $vxyz$. Call these $y_1$ and $y_2$ (we allow that $y_1$ and $y_2$ may not be distinct from $z_1$ and $z_2$). It is easy to see that if $c(yy_i)$ is not $2$ or $4$, then either $vzxyy_i$ or $vxzyy_i$ is a rainbow-$P_4$ ending in $v$, a contradiction. Moreover, $c(yy_i) \neq 2$ or the coloring is not proper. So both $c(yy_1)$ and $c(yy_2)$ must be $4$, a contradiction.
\end{proof}





On the other hand, by Lemma~\ref{degree-lemma}, any vertex in a rainbow-$C_5$ in a rainbow-$P_5$-free graph has degree at most $7$. We can count rainbow-$C_5$ copies on a fixed vertex as follows. For every vertex $v$ which is on a rainbow-$C_5$, each rainbow-$C_5$ containing $v$ begins with an edge incident to $v$. There are at most $7$ choices of edge, and each is contained in at most $4!$ rainbow-$C_5$ copies by Lemma~\ref{length-5-lemma}. Each rainbow-$C_5$ is counted ten times this way as each $C_5$ contains five vertices and each rainbow-$C_5$ is counted twice per vertex (because every rainbow-$C_5$ containing $v$ in fact uses two edges incident to $v$, and is counted once by each). In this way we obtain the following slight improvement $\ex^*(n,C_5,P_5) \leq \frac{4!}{5}\cdot\frac{7}{2}n = 16.8n$.

The bounds given above are clearly not the best possible; it is easy to show that if $G$ is a rainbow-$P_5$-free graph containing a rainbow-$C_5$, say $C$, then not every vertex on $C$ can have degree $7$. Therefore, a more careful analysis of degree constraints for vertices on a rainbow-$C_5$ is needed.

The proof of our upper bound relies on Lemma~\ref{length-5-lemma} and another key step. We show that a rainbow-$C_5$ containing high-degree vertices must contain vertices of low degree.
By appropriately pairing vertices of high degree and low degree we can show
that the average degree over all vertices contained in a rainbow-$C_5$ is at most $5$. Combining this observation with Lemma~\ref{length-5-lemma} will give the desired bound on $\ex^*(n,C_5,P_5)$.

\begin{theorem}\label{length-5}
If $n$ is divisible by $16$, then
$\ex^*(n, C_5, P_5) = 12 n$.
\end{theorem}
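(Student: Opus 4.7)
The lower bound $12n \leq \ex^*(n,C_5,P_5)$ is given by Theorem~\ref{lower-bound}, so the plan is to establish the matching upper bound. Let $G$ be an $n$-vertex, properly edge-colored, rainbow-$P_5$-free graph, and let $V_C$ denote the set of vertices of $G$ that lie on at least one rainbow-$C_5$. The first step is a vertex-based double count: each rainbow-$C_5$ through $v \in V_C$ uses exactly two edges incident to $v$, so Lemma~\ref{length-5-lemma} shows that at most $\tfrac{1}{2} d(v) \cdot 4! = 12\, d(v)$ rainbow-$C_5$ copies contain $v$. Summing over $V_C$ and dividing by $5$ (since each rainbow-$C_5$ contains five vertices), the total number of rainbow-$C_5$ copies in $G$ is at most $\tfrac{12}{5}\sum_{v \in V_C} d(v)$, so the target bound $12n$ reduces to the structural claim that the average degree in $V_C$ is at most $5$, i.e., $\sum_{v \in V_C} d(v) \leq 5|V_C|$.

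The second, and key, step is to prove this structural claim. By Lemma~\ref{degree-lemma} every $v \in V_C$ has $d(v) \leq 7$, so I would set $H := \{v \in V_C : d(v) \geq 6\}$ and $L := \{v \in V_C : d(v) \leq 4\}$ and aim to build an injection $\varphi : H \to L$ with $d(v) + d(\varphi(v)) \leq 10$ for every $v \in H$. Given such a $\varphi$, partitioning $V_C = H \sqcup \varphi(H) \sqcup (V_C \setminus (H \cup \varphi(H)))$ and using that every vertex outside $H$ has degree at most $5$ yields
\[
\sum_{v \in V_C} d(v) = \sum_{v \in H}\bigl(d(v)+d(\varphi(v))\bigr) + \sum_{u \in V_C \setminus (H \cup \varphi(H))} d(u) \leq 10|H| + 5(|V_C| - 2|H|) = 5|V_C|,
\]
as required. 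To define $\varphi(v)$ for a given $v \in H$, fix a rainbow-$C_5$ $C = vv_2v_3v_4v_5v$ through $v$. When $d(v) = 7$, the proof of Lemma~\ref{degree-lemma} forces $v$ to be adjacent to all of $v_2,v_3,v_4,v_5$ and to three further vertices whose incident edges bear the three cycle-colors of $C$ not incident to $v$. These forced chords and non-cycle edges create many auxiliary rainbow paths through $v$, and rainbow-$P_5$-freeness then heavily constrains the edges at $v_2,\dots,v_5$; a casework analysis in the spirit of Lemma~\ref{length-5-lemma} should force at least one $v_i$ to have degree at most $3$, which is selected as $\varphi(v)$. The case $d(v) = 6$ is analogous, with target degree at most $4$.

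The main obstacle will be arranging $\varphi$ to be injective, since several high-degree vertices may lie on overlapping rainbow-$C_5$ copies and a naive construction might assign them all to the same low-degree target. I expect the cleanest implementation to be a short discharging argument: each $v \in V_C$ begins with charge $d(v)$, each $v \in H$ pushes $d(v) - 5$ units of charge along a carefully chosen edge of a rainbow-$C_5$ through $v$ to a low-degree neighbor, and one then verifies that no vertex ends with charge above $5$. Justifying the discharging rules will require an exhaustive inspection of the configurations of rainbow-$C_5$ copies containing a vertex of degree $6$ or $7$---more intricate than, but similar in spirit to, the casework in the proof of Lemma~\ref{length-5-lemma}---and ensuring that the low-degree targets are not over-subscribed is where most of the work will go.
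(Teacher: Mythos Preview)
Your plan is correct and matches the paper's approach: the same vertex double count reduces the upper bound to showing the average degree over $V_C$ is at most $5$, and this is established by pairing each vertex of degree $6$ or $7$ on a rainbow-$C_5$ with a low-degree vertex on that same cycle via case analysis. The paper resolves the over-subscription issue you flag not with a global injection or discharging but by attaching to each rainbow-$C_5$ containing a high-degree vertex a local pair of sets $(S,T)$ and verifying, as part of the casework, the extra condition that every high-degree neighbor of a vertex placed in $T$ already lies in the corresponding $S$; this local condition is exactly what guarantees the union of all such pairs has average degree at most $5$ and plays the role of your proposed discharging rules.
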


\begin{proof}
Theorem~\ref{lower-bound} gives $12 n = \frac{(5-1)!}{2}n \leq \ex^*(n, C_5, P_5)$. To prove the upper bound, consider an $n$-vertex graph $G$ with a proper edge-coloring with no rainbow-$P_5$. Let $V'$ be the set of vertices in $G$ which are contained in at least one rainbow-$C_5$. By Lemma \ref{length-5-lemma}, any vertex $v$ in $V'$ is contained in at most $\frac{4!d(v)}{2}$ rainbow-$C_5$ copies, since each edge incident to $v$ is in at most $4!$ rainbow-$C_5$ copies and each rainbow-$C_5$ containing $v$ uses two edges incident to $v$. Thus, the total number of rainbow-$C_5$ copies in $G$ is at most 
$$\underset{v \in V'}{\sum} \frac{4!d(v)}{2 \cdot 5} = \frac{4!}{2 \cdot 5} \underset{v \in V'}{\sum} d(v).$$
If the average degree of vertices in $V'$ is at most $5$, then we immediately have 
$$\frac{4!}{2 \cdot 5} \underset{v \in V'}{\sum} d(v) \leq \frac{4!}{2 \cdot 5} \cdot 5 |V'| \leq \frac{4!}{2} n = 12 n,$$
and we are done. In order to establish that the average degree in $V'$ is at most $5$, we will need the following technical claim.

\begin{claim}\label{degree-claim}
Let $C$ be a rainbow-$C_5$ in $G$ containing a vertex of degree at least $6$ and let $S$ be the set of vertices on $C$ with degree at least $6$. Then there is a set $T$ of vertices on $C$ such that:

\begin{enumerate}

\item[(1)]  each vertex of $S$ is adjacent to at least one vertex of $T$, and each vertex of $S$ is adjacent to at least one vertex of $T$;

\item[(2)]  if $v \in T$ is adjacent to $u$ and $d(u) \geq 6$, then $u \in S$;

\item[(3)] $\displaystyle \frac{1}{|T|+ |S|} \sum_{v \in T \cup S}d(v) \leq 5$.

\end{enumerate}

\end{claim}

\begin{proof}

We call a pair of sets $S, T$ satisfying all of the above conditions an \textit{S,T pair}.

Without loss of generality $C = v_1v_2v_3v_4v_5v_1$ has edges colored (in order) $1,2,3,4,5$, and $d(v_1) > 5$. By Lemma \ref{degree-lemma}, $d(v_1) \leq 7$, so we must either have $d(v_1) = 6$ or $d(v_1) = 7$. We shall consider both cases. Frequently in the cases below, we shall recall the following simple observation: If any vertex $v_i$ of $C$ is incident to an edge which is not colored from $\{ 1, 2, 3, 4, 5\}$, then this edge is of the form $v_iv_j$ for some vertex $v_j$ of $C$. In particular, if $d(v_i) = 5+ k$, then $v_i$ is incident to at least $k$ chords of $C$ whose colors are not in $\{1,2,3,4,5\}$. Recall that there is no edge of color not in $\{1,2,3,4,5\}$ with exactly one endpoint in $C$ as otherwise we get a rainbow-$P_5$.

\smallskip

\textbf{Case 1:} $d(v_1) = 7$. 

\smallskip

Since $d(v_1) = 7$, $v_1$ has three neighbors not on $C$, say $u_1,u_2,$ and $u_3$, and both $v_1v_3$ and $v_1v_4$ are edges. Without loss of generality, we have $c(v_1u_1) = 2$, $c(v_1u_2) = 3$, $c(v_1u_3) = 4$, $c(v_1v_3) = 6$, and $c(v_1v_4) = 7$.

We first bound $d(v_2)$. It is easy to check that any edge $v_2 w$ with $w$ not on $C$ creates a rainbow-$P_5$. Also, if $v_2v_4$ is an edge, then (noting that $c(v_2v_4)$ is not equal to $1,2,3,4$, or $7$) either $u_2v_1v_3v_2v_4v_5$ or $u_2v_1v_5v_4v_2v_3$ is a rainbow-$P_5$. Hence, $d(v_2) \leq 3$. By symmetry, $d(v_5) \leq 3$.

We next bound $d(v_4)$. As noted above, $v_4v_2$ is not an edge. Also, if $v_4w$ is an edge with $w$ not on $C$, then $c(v_4w) \neq 1$, since otherwise $wv_4v_5v_1v_3v_2$ is rainbow. So $v_4$ has at most two neighbors not on $C$ (since the edge incident to any such neighbor must be colored either $2$ or $5$), and at most three neighbors on $C$. Hence, $d(v_4) \leq 5$. By symmetry, $d(v_3) \leq 5$.
Thus, in this case, we can choose $S= \{v_1\}$ and $T = \{v_2\}$ to form an $S,T$ pair.

\smallskip

 \textbf{Case 2:} $d(v_1) = 6$.

\smallskip

 We distinguish three subcases.
 
 \smallskip

 \textbf{Case 2.1:} $v_1$ is adjacent to both $v_3$ and $v_4,$ and both $c(v_1v_4)$ and $c(v_1v_4)$ are not in $\{1,2,3,4,5\}$.
 
 \smallskip

Without loss of generality, $c(v_1v_3) = 6$ and $c(v_1v_4) = 7$. We observe that $v_2$ and $v_5$ are only adjacent to vertices on $C$, so have degrees at most $4$. Moreover, suppose that one of $v_3,v_4$ has degree greater than $5$. The two vertices are symmetric thus far, so we may assume that $d(v_3) > 5$. We established in Case 1 that a vertex of degree $7$ is never on a rainbow-$C_5$ containing any other vertex of degree greater than $5$, so $d(v_3) = 6$. We note that if $v_3u$ is an edge with $u$ not on $C$, then $c(v_3u) \neq 5$ (as otherwise we get a rainbow-$P_5$). The picture then must be as below.

\begin{center}
\begin{tikzpicture}
\filldraw (-1,0) circle(0.05 cm);
\filldraw (1,0) circle(0.05 cm);
\filldraw (-1,1) circle(0.05 cm);
\filldraw (1,1) circle(0.05 cm);
\filldraw (0,2) circle(0.05 cm);
\draw (-1,0) -- (1,0) node[pos=0.5, below] {1};
\draw (-1,0) -- (-1,1) node[pos=0.5, left] {2};
\draw (1,0) -- (1,1) node[pos=0.5, right] {5};
\draw (-1,1) -- (0,2) node[pos=0.6, left] {3};
\draw (1,1) -- (0,2) node[pos=0.6, right] {4};
\draw (1,0) node[right] {$v_1$};
\draw (-1,0) node[left] {$v_2$};
\draw (-1,1) node[left] {$v_3$};
\draw (0,2) node[above] {$v_4$};
\draw (1,1) node[right] {$v_5$};
\draw (1,0) -- (-1,1) node[pos=0.6, below]{6};
\draw (1,0) -- (0,2) node[pos=0.6, below]{7};
\filldraw (-3, 0) circle (0.05 cm);
\filldraw (-3, 2) circle (0.05 cm);
\draw (-3,0) node[left] {$u_1$};
\draw (-3,2) node[right] {$u_2$};
\draw (-1,1) -- (-3, 0) node[pos=0.5, below] {1};
\draw (-1,1) -- (-3, 2) node[pos=0.5, above] {4};
\draw (-1,1) .. controls (-.5,3) and (.5,3) .. (1,1);
\end{tikzpicture}
\end{center}

In order for the coloring to be proper, $c(v_3v_5)$ is either $7$ or a color not yet used, say $8$. With this fact, we may observe that $v_4$ is also adjacent only to vertices on $C$. 

Thus, we may choose either $S = \{v_1\}$ and $T = \{v_2\}$ or $S = \{v_1, v_3\}$ and $T = \{v_2,v_4\}$ to obtain an $S,T$ pair.

\smallskip

\textbf{Case 2.2:} $v_1$ is adjacent to both $v_3$ and $v_4,$ and one of $c(v_1v_3), c(v_1v_4)$ is in $\{1,2,3,4,5\}$.

\smallskip

Without loss of generality, $c(v_1v_3)$ is not in $\{1,2,3,4,5\}$, say $c(v_1v_3) = 6$. 
So $c(v_1v_4) \in \{1,2,3,4,5\}$, which forces $c(v_1v_4) = 2$. The vertex $v_1$ is adjacent to two vertices not on $C$, say $w_1$ and $w_2$. Without loss of generality, $c(v_1w_1) = 3$ and $c(v_1w_2) = 4$. We draw this below.

\begin{center}
\begin{tikzpicture}
\filldraw (-1,0) circle(0.05 cm);
\filldraw (1,0) circle(0.05 cm);
\filldraw (-1,1) circle(0.05 cm);
\filldraw (1,1) circle(0.05 cm);
\filldraw (0,2) circle(0.05 cm);
\draw (-1,0) -- (1,0) node[pos=0.5, below] {1};
\draw (-1,0) -- (-1,1) node[pos=0.5, left] {2};
\draw (1,0) -- (1,1) node[pos=0.5, right] {5};
\draw (-1,1) -- (0,2) node[pos=0.6, left] {3};
\draw (1,1) -- (0,2) node[pos=0.6, right] {4};
\draw (1,0) node[right] {$v_1$};
\draw (-1,0) node[left] {$v_2$};
\draw (-1,1) node[left] {$v_3$};
\draw (0,2) node[above] {$v_4$};
\draw (1,1) node[right] {$v_5$};
\draw (1,0) -- (-1,1) node[pos=0.6, below]{6};
\draw (1,0) -- (0,2) node[pos=0.6, below]{2};
\filldraw (3,-1) circle (0.05 cm);
\filldraw (2,-1.5) circle (0.05 cm);
\draw (3,-1) node[right] {$w_1$};
\draw (2,-1.5) node[right] {$w_2$};
\draw (1,0) -- (3,-1) node[pos = 0.5, above] {3};
\draw (1,0) -- (2,-1.5) node[pos = 0.5, left] {4};
\end{tikzpicture}
\end{center}

Observe that if the edge $v_2v_4$ is present, then $c(v_2v_4)$ is not in $\{1,2,3,4\}$, and so either $w_1v_1v_5v_4v_2v_3$ or $w_1v_1v_3v_2v_4v_5$ is a rainbow-$P_5$. Thus, $v_2v_4$ is not an edge. Furthermore, if the edge $v_2v_5$ is present, then $c(v_2v_5)$ is not in $\{1,2,4,5\}$, and cannot be $6$ or $7$, since then $v_4v_3v_2v_5v_1w_2$ is rainbow. So if $v_2v_5$ is an edge, then $c(v_2v_5) = 3$. Finally, if $v_2u$ is an edge with $u$ not on $C$, then $c(v_2u)$ must be in $4$, else one of $uv_2v_3v_1v_5v_4$ or $uv_2v_1v_3v_4v_5$ is rainbow. Thus, $d(v_2) \leq 4$.

We have seen that $v_2v_4$ is not an edge. Observe that if $v_4u$ is an edge with $u$ not on $C$, then $c(v_4,u)$ cannot be in $\{2,3,4\}$, which forces $c(v_4u) = 5$, else either $uv_4v_5v_1v_2v_3$ or $uv_4v_5v_1v_3v_2$ is rainbow. So $d(v_4) \leq 4$.

Finally, suppose that $d(v_5) \geq 6$. So $v_5$ must have an incident edge of color not in $\{1,2,3,4,5\}$. This edge must have both endpoints in $C$. We have seen that if $v_2v_5$ is an edge it is color $3$, so $v_3v_5$ must be this edge. The vertex $v_3$ is incident to an edge of color $6$, so $v_5v_3$ must be a color not yet used, say $7$.
Now observe that $w_1v_1v_2v_3v_5v_4$ is rainbow, a contradiction.
We illustrate this below.

\begin{center}
\begin{tikzpicture}
\filldraw (-1,0) circle(0.05 cm);
\filldraw (1,0) circle(0.05 cm);
\filldraw (-1,1) circle(0.05 cm);
\filldraw (1,1) circle(0.05 cm);
\filldraw (0,2) circle(0.05 cm);
\draw (-1,0) -- (1,0) node[pos=0.5, below] {1};
\draw (-1,0) -- (-1,1) node[pos=0.5, left] {2};
\draw (1,0) -- (1,1) node[pos=0.5, right] {5};
\draw (-1,1) -- (0,2) node[pos=0.6, left] {3};
\draw (1,1) -- (0,2) node[pos=0.6, right] {4};
\draw (1,0) node[right] {$v_1$};
\draw (-1,0) node[left] {$v_2$};
\draw (-1,1) node[left] {$v_3$};
\draw (0,2) node[above] {$v_4$};
\draw (1,1) node[right] {$v_5$};
\draw (1,0) -- (-1,1) node[pos=0.6, below]{6};
\draw (1,0) -- (0,2) node[pos=0.6, below]{2};
\filldraw (3,-1) circle (0.05 cm);
\filldraw (3,-2) circle (0.05 cm);
\draw (3,-1) node[right] {$w_1$};
\draw (3,-2) node[right] {$w_2$};
\draw (1,0) -- (3,-1) node[pos = 0.5, above] {3};
\draw (1,0) -- (3,-2) node[pos = 0.5, below] {4};
\draw (1,1) .. controls (.5,3) and (-.5,3) .. (-1,1);
\draw (0,2.75) node {7};
\end{tikzpicture}
\end{center}

 Therefore $d(v_5) \leq 5$, $d(v_2) \leq 4$, and $d(v_4) \leq 4$. We have $d(v_1) = 6$, and $d(v_3)$ may equal $6$. Thus, we choose either $S = \{v_1\}$ and $T = \{v_2\}$ or $S = \{v_1,v_3\}$ and $T = \{v_2,v_4\}$. 

It is clear that $S$ and $T$ satisfy conditions (1) and (3). We must check that $T$ satisfies condition (2).

We claim that it will suffice to show that neither $v_2$ nor $v_5$ is adjacent to a vertex $u$ such that $u$ is not on $C$ and $d(u) \geq 6$. Indeed, in both pairings, the only vertices which can appear in $T$ are $v_2$ and $v_4$. Moreover, $v_4$ is only included in $T$ when $d(v_3) = 6$. Recall that $v_1v_3$ is a chord colored $6$. Observing the symmetry in this configuration,

\begin{center}
\begin{tikzpicture}
\filldraw (-1,0) circle(0.05 cm);
\filldraw (1,0) circle(0.05 cm);
\filldraw (-1,1) circle(0.05 cm);
\filldraw (1,1) circle(0.05 cm);
\filldraw (0,2) circle(0.05 cm);
\draw (-1,0) -- (1,0) node[pos=0.5, below] {1};
\draw (-1,0) -- (-1,1) node[pos=0.5, left] {2};
\draw (1,0) -- (1,1) node[pos=0.5, right] {5};
\draw (-1,1) -- (0,2) node[pos=0.6, left] {3};
\draw (1,1) -- (0,2) node[pos=0.6, right] {4};
\draw (1,0) node[right] {$v_1$};
\draw (-1,0) node[left] {$v_2$};
\draw (-1,1) node[left] {$v_3$};
\draw (0,2) node[above] {$v_4$};
\draw (1,1) node[right] {$v_5$};
\draw (1,0) -- (-1,1) node[pos=0.6, below]{6};
\end{tikzpicture}
\end{center}
it is clear that if $v_5$ is not adjacent to a vertex not on $C$ of degree at least $6$, then the same argument implies that $v_4$ is not
adjacent to a vertex not on $C$ of degree at least $6$ when $d(v_3) = 6$.

Suppose first that $v_2$ is adjacent to a vertex $u$ not on $C$ with $d(u) \geq 6$. 
We have established already that $c(v_2u) = 4$. Furthermore, since $d(u) \geq 6$, $u$ has at least one neighbor, say $x$, not on $C$. It is easy to see that $c(xu)$ must be $6$. Therefore, $u$ has only one neighbor not on $C$, and so $u$ is adjacent to every vertex on $C$. In particular, $uv_4$ is an edge. This is pictured below.

\begin{center}
\begin{tikzpicture}
\filldraw (-1,0) circle(0.05 cm);
\filldraw (1,0) circle(0.05 cm);
\filldraw (-1,1) circle(0.05 cm);
\filldraw (1,1) circle(0.05 cm);
\filldraw (0,2) circle(0.05 cm);
\draw (-1,0) -- (1,0) node[pos=0.5, below] {1};
\draw (-1,0) -- (-1,1) node[pos=0.5, left] {2};
\draw (1,0) -- (1,1) node[pos=0.5, right] {5};
\draw (-1,1) -- (0,2) node[pos=0.6, left] {3};
\draw (1,1) -- (0,2) node[pos=0.6, right] {4};
\draw (1,0) node[right] {$v_1$};
\draw (-1,0) node[left] {$v_2$};
\draw (-1,1) node[left] {$v_3$};
\draw (0,2) node[above] {$v_4$};
\draw (1,1) node[right] {$v_5$};
\draw (1,0) -- (-1,1) node[pos=0.6, below]{6};
\filldraw (-2,-1) circle(0.05 cm);
\draw (-2,-1) node[left] {$u$};
\draw (-1,0) -- (-2,-1) node[pos = 0.5, right] {4};
\filldraw (-3, 1) circle(0.05 cm);
\draw (-3,1) node[left] {$x$};
\draw (-2,-1) -- (-3,1) node[pos = 0.5, left] {6};
\draw (-2,-1) .. controls (-1.5,2) and (-1,2) .. (0,2);
\end{tikzpicture}
\end{center}

 The edge $uv_4$ is not in colored from $\{3,4,6\}$, so we must have $c(uv_4) = 5$, else $v_2uv_4v_3v_1v_5$ is rainbow. But if $c(uv_4) = 5$, then $xuv_4v_3v_2v_1$ is rainbow. We conclude that $v_2$ is not adjacent to a vertex of degree at least $6$ which is not on $C$.

Now suppose that $v_5$ is adjacent to a vertex $u$ such that $u$ is not on $C$ and $d(u) \geq 6$. To avoid a rainbow-$P_5$, we must have $c(v_5u) \in \{1,3\}$. Since $d(u) \geq 6$, $u$ has at least one neighbor, say $x$, which is not on $C$. Now note that if $c(v_5u) = 1$, then one of $xuv_5v_4v_3v_1$, $xuv_5v_1v_3v_2$, $xuv_5v_4v_3v_2$ is rainbow, regardless of $c(xu)$. So we may assume that $c(v_5u) = 3$. It can be checked that $c(xu)=2$ as otherwise we get a rainbow-$P_5$. This implies that $u$ is adjacent to every vertex of $C$. In particular, $u$ is adjacent to $v_2$, i.e., $v_2$ is adjacent to a vertex $u$ not on $C$ with $d(u) \geq 6$ which we have proved is a contradiction.

\smallskip

\textbf{Case 2.3:} $v_1$ is not adjacent to one of $v_3, v_4$.

\smallskip

Without loss of generality, $v_1v_4$ is not an edge. In order to achieve $d(v_1) = 6$, $v_1$ must be adjacent to $v_3$ and have three neighbors not on $C$, say $w_1, w_2, w_3$, with $c(v_1w_1) = 2$, $c(v_1w_2) = 3$, and $c(v_1w_3) = 4$.

\begin{center}
\begin{tikzpicture}
\filldraw (-1,0) circle(0.05 cm);
\filldraw (1,0) circle(0.05 cm);
\filldraw (-1,1) circle(0.05 cm);
\filldraw (1,1) circle(0.05 cm);
\filldraw (0,2) circle(0.05 cm);
\draw (-1,0) -- (1,0) node[pos=0.5, below] {1};
\draw (-1,0) -- (-1,1) node[pos=0.5, left] {2};
\draw (1,0) -- (1,1) node[pos=0.5, right] {5};
\draw (-1,1) -- (0,2) node[pos=0.6, left] {3};
\draw (1,1) -- (0,2) node[pos=0.6, right] {4};
\draw (1,0) node[right] {$v_1$};
\draw (-1,0) node[left] {$v_2$};
\draw (-1,1) node[left] {$v_3$};
\draw (0,2) node[above] {$v_4$};
\draw (1,1) node[right] {$v_5$};
\draw (1,0) -- (-1,1) node[pos=0.6, below]{6};
\filldraw (3,-1) circle (0.05 cm);
\filldraw (2,-1.5) circle (0.05 cm);
\filldraw (1,-1.5) circle (0.05 cm);
\draw (3,-1) node[right] {$w_1$};
\draw (2,-1.5) node[right] {$w_2$};
\draw (1,-1.5) node[right]{$w_3$};
\draw (1,0) -- (3,-1) node[pos = 0.5, above] {2};
\draw (1,0) -- (2,-1.5) node[pos = 0.5, left] {3};
\draw (1,0) -- (1,-1.5) node[pos=0.5,left]{4};
\end{tikzpicture}
\end{center}

We first examine the degree of $v_2$. If $v_2u$ is an edge with $u$ not on $C$ (allowing $u = w_1, w_2$, or $w_3$), we must have $c(v_2u) = 4$ to avoid a rainbow-$P_5$. If $v_2v_4$ is an edge, then we must have $c(v_2v_4) = 6$, and if $v_2v_5$ is an edge, then we must have $c(v_2v_5) = 3$. So $d(v_2) \leq 5$. 

We next examine $v_4$. An edge $v_4u$ with $u$ not on $C$ cannot be colored $3$ or $4$, and must not be colored $1$ to avoid a rainbow-$P_5$. Moreover, if $c(v_4u) = 5$, then $u$ must equal $w_3$ in order to avoid a rainbow-$P_5$. Now, note that if $v_4w_3$ is an edge with $c(v_4w_3) = 5$, then the addition of the chord $v_2v_4$ (which must be colored $6$ by the above argument) creates a rainbow-$P_5$. These observations together imply $d(v_4) \leq 4$ ($v_4$ having either two neighbors on $C$ and at most two not on $C$, or three neighbors on $C$ and at most one not on $C$).  

Finally, we examine $v_5$. We have already seen that if $v_2v_5$ is present, then $c(v_2v_5) = 3$. If $v_3v_5$ is an edge, then we must have $c(v_3v_5)$ is $1$ or $7$. Finally, if $v_5u$ is an edge with $u$ not on $C$, then $c(v_5u) \neq 2$. Thus, any edge incident to $v_5$ must be colored from $\{1,3,4,5\}$, so $d(v_5) \leq 4$.

It is possible that $d(v_3) = 6$. So we choose either $S = \{v_1\}$ and $T = \{v_5\}$ or $S = \{ v_1, v_3\}$ and $T = \{v_4, v_5\}$. By an argument analogous to that in Case $2.2$, this produces an $S,T$ pair.
\renewcommand{\qedsymbol}{$\blacksquare$}
\end{proof}

With this claim established, we are now prepared to finish the proof. For each rainbow-$C_5$ in $G$ containing a vertex of degree greater than $5$, select an $S,T$ pair. Let $V'$  be the set of vertices of $G$ contained in at least one rainbow-$C_5$ and $U$ the set of vertices which are placed in $S,T$ pairs. Then the average degree of the vertices in $V'$ is
\[
\frac{\underset{v \in V'}{\sum} d(v)}{|V'|} = \frac{\underset{v \in U}{\sum} d(v) + \underset{v \in V' \setminus U}{\sum} d(v)}{|U| + |V' \setminus U|} \leq 5
\]
as
the conditions satisfied by $S,T$ pairs are sufficient to imply that $\underset{v \in U}{\sum} d(v) \leq 5|U|$, and we must have $\underset{v \in V' \setminus U}{\sum} d(v) \leq 5|V' \setminus U|$ since every vertex of degree greater than $5$ in $V'$ is in $U$.
\end{proof}

\begin{remark}
    As in the case of $P_3$ and $P_4$, the proof of Theorem~\ref{length-5} can be adapted to show that the only rainbow-$P_5$-free graphs attaining $\ex^*(n,C_5,P_5)$ are $5$-regular. We exclude the details as they involve further analysis in the subcases in the proof.
\end{remark}

\end{document}